\theoremstyle{definition}
\newtheorem{thm}{Theorem}[section]
\newtheorem{defn}[thm]{Definition}
\newtheorem{lem}[thm]{Lemma}
\newtheorem{prop}[thm]{Proposition}
\newtheorem{cor}[thm]{Corollary}
\newtheorem{rem}[thm]{Remark}
\newtheorem*{thm*}{Theorem}
\newtheorem*{cor*}{Corollary}
\newtheorem*{prp*}{Proposition}
\newtheorem{problem}{Problem}
\newtheorem*{ntt}{Notation}
\newcommand{\nn}{\mathbb{N}}
\newcommand{\inn}{\in\mathbb{N}}
\newcommand{\ee}{\varepsilon}
\newcommand{\al}{\alpha}
\newcommand{\la}{\lambda}
\newcommand{\vp}{\varepsilon}
\newcommand{\N}{\mathbb{N}}
\newcommand\restr[2]{{% we make the whole thing an ordinary symbol
  \left.\kern-\nulldelimiterspace % automatically resize the bar with \right
  #1 % the function
  \vphantom{\big|} % pretend it's a little taller at normal size
  \right|_{#2} % this is the delimiter
  }}
\newcommand{\pon}{{p_1^\prime}}
\newcommand{\pze}{{p_{\xi_0}^\prime}}
\DeclareMathOperator{\supp}{supp}
\DeclareMathOperator{\ran}{ran}
\long\def\symbolfootnote[#1]#2{\begingroup%
\def\thefootnote{\fnsymbol{footnote}}\footnote[#1]{#2}\endgroup}
\begin{document}

\title [Disconnected set of $p$'s in Krivine's Theorem]{The stabilized set of $p$'s in Krivine's theorem can be disconnected}
\dedicatory{In memory of Edward Odell}

\author[K. Beanland]{Kevin Beanland}
\address{Department of Mathematics,
    Washington and Lee University,
    Lexington, VA 24450}
\email{beanlandk@wlu.edu}

\author[D. Freeman]{Daniel Freeman}
\address{Department of Mathematics and Computer Science,
Saint Louis University, St Louis, MO 63103} \email{dfreema7@slu.edu}

\author[P. Motakis]{Pavlos Motakis}
\address{National Technical University of Athens, Faculty of Applied Sciences,
Department of Mathematics, Zografou Campus, 157 80, Athens, Greece}
\email{pmotakis@central.ntua.gr}

\maketitle

\symbolfootnote[0]{\textit{2010 Mathematics Subject
Classification:} Primary 46B03, 46B06,  46B07,  46B25, 46B45}

\symbolfootnote[0]{\textit{Key words:} Spreading models, Finite block representability, Krivine's Theorem}
\symbolfootnote[0]{The first named author acknowledges support from the Lenfest Summer Grant at Washington and Lee University.}
\symbolfootnote[0]{Research of the second author was supported by the National Science Foundation.}
\symbolfootnote[0]{The authors would like to acknowledge the support of program API$\Sigma$TEIA-1082.}

\begin{abstract} 
For any  closed subset $F$ of $[1,\infty]$ which is either finite or consists of the elements of an increasing sequence and its limit, a reflexive Banach space $X$ with a 1-unconditional basis is constructed so that in each block subspace $Y$ of $X$, $\ell_p$ is finitely block represented in $Y$  if and only if $p \in F$. In particular, this solves the question as to whether the stabilized Krivine set for a Banach space had to be connected. We also prove that for every infinite dimensional subspace  $Y$ of $X$ there is  a dense subset $G$ of $F$ such that the spreading models admitted by $Y$  are exactly the $\ell_p$ for $p\in G$.  
\end{abstract}

\section{Introduction}

In the past, many of the driving questions in the study of Banach
spaces concerned the existence of ``nice'' subspaces of general
infinite dimensional Banach spaces.  Finding counterexamples to
these questions involved developing new ideas for constructing
Banach spaces. B. Tsirelson's construction of a reflexive
infinite dimensional Banach space which does not contain $\ell_p$
for any $1<p<\infty$ \cite{T} and W.T. Gowers and B. Maurey's construction
of an infinite dimensional Banach space which does not contain an
unconditional basic sequence \cite{GM} are two important examples.  
On the other hand, after Tsirelson's construction, J-L. Krivine 
proved that every basic sequence  contains $\ell_p$ for some $1\leqslant
p\leqslant\infty$ finitely block represented \cite{K} (where the case
$p=\infty$ refers to $c_0$), and  it is not difficult to show that every normalized weakly null sequence in a Banach space has a subsequence with a 1-suppression unconditional spreading model. Thus, though we cannot always find these properties
in infinite dimensional subspaces, they are still always
present in certain finite block or asymptotic structure.

In his paper on Krivine's Theorem, Rosenthal proved that given any
Banach space, the set of $p$'s such that $\ell_p$ is finitely block
represented in the Banach space can be stabilized on a subspace
\cite{R2} (for a simplified proof of the stability result see also \cite[page 133]{M}). That is, given any infinite dimensional Banach space $X$, there
exists an infinite dimensional subspace $Y\subseteq X$ with a basis
and a nonempty closed subset $I\subseteq[1,\infty]$ such that for every block subspace $Z$ of $Y$, $\ell_p$ is
finitely block represented in $Z$ if and only if
$p\in I$.   Rosenthal concluded his
paper by asking if this stabilized Krivine set $I$ had to be a
singleton. E. Odell and Th. Schlumprecht answered this question by
constructing a Banach space $X$ with an unconditional basis which
had the property that every unconditional basic sequence is finitely
block represented in every block sequence in $X$ \cite{OS1}.  Thus,
the stabilized Krivine set for this space is the interval $[1,\infty]$.
Later, Odell and Schlumprecht constructed a Banach space with a
conditional basis which had the property that every  monotone basic sequence
is finitely block represented in every block sequence in $X$
\cite{OS2}.   At this point, the known possible stabilized Krivine
sets for a Banach space are singletons and the entire interval
$[1,\infty]$. P. Habala and N. Tomczak-Jaegermann  proved that if $1\leqslant p< q\leqslant \infty$ and $X$ is an infinite dimensional Banach space such that $\ell_p$ and $\ell_q$ are finitely block represented 
in every block subspace of $X$ then $X$ has a quotient $Z$ so 
that every $r \in[p,q]$ is finitely block represented in $Z$ \cite{HT}.
They then asked if the stabilized Krivine set
for a Banach space is always connected \cite{HT}, which was  later included as problem 12 in
Odell's presentation of 15 open problems in Banach spaces at the
Fields institute in 2002 \cite{O}. We solve the stabilized Krivine set problem with the following theorem.

\begin{thm*}\label{T:main}
Let $F\subseteq [1,\infty]$ be either a finite set or a set consisting of an increasing sequence and its limit. Then there exists a reflexive Banach space $X$  with an unconditional basis such that for every infinite dimensional block subspace $Y$ of $X$:
\begin{itemize}
\item[(i)] For all $1\leqslant p\leqslant\infty$, the space $\ell_p$ is finitely block represented
in $Y$ if and only if $p \in F$.  
\item[(ii)] If $F$ is finite then the spreading models admitted by $Y$ are exactly the spaces $\ell_p$ for $p\in F$.
\item[(iii)] If $F$ is an increasing sequence with limit $p_{\omega}$  then every spreading model admitted by $Y$ is
isomorphic to $\ell_p$ for some $p\in F$ and for every $p\in F\setminus \{p_{\omega}\}$ $\ell_p$ 
is admitted as a spreading model by $Y$.
\end{itemize}

\end{thm*}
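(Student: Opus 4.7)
The plan is to construct $X$ as a mixed Tsirelson-type space whose norming set $K\subseteq c_{00}$ is the smallest subset of $B_{\ell_\infty}$ containing $\pm e_n^*$ and closed under operations indexed by $F$: for each $p\in F$ with conjugate exponent $p^*$, a parameter $\theta_p\in(0,1)$ and a Schreier-type admissibility family $\mathcal{A}_p\subseteq[\N]^{<\omega}$ are fixed so that whenever $(f_i)_{i=1}^d$ is an $\mathcal{A}_p$-admissible block sequence in $K$, the functional $\theta_p\bigl(\sum_i |f_i|^{p^*}\bigr)^{1/p^*}$ is also in $K$. The choice of $\theta_p$ and the order of $\mathcal{A}_p$ has to be carefully tuned, in the spirit of the constructions in the Odell--Schlumprecht papers cited above: $\theta_p$ determines the ``$\ell_p$ gain per application'' and the order of $\mathcal{A}_p$ must grow with $p$ so that higher-$p$ averages cost more support. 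In case (iii), I would take $\mathcal{A}_{p_n}$ of strictly increasing generalized Schreier order $n$, which produces the accumulation at $p_\omega$ without allowing a single admissibility family to realize $\ell_{p_\omega}$ exactly.

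Once $X$ is defined, unconditionality of the basis is immediate from the fact that $K$ is closed under sign changes and restriction to subsets of coordinates, while reflexivity follows by the usual Figiel--Johnson-style argument: the conditions $\theta_p<1$ for each $p$ rule out an $\ell_1$- or $c_0$-copy spanned by the basis, and James's theorem together with unconditionality then yields reflexivity. For the lower-bound half of (i), given a block subspace $Y$ and $p\in F$, I would take a block sequence $(x_i)_{i=1}^d$ in $Y$ of prescribed rapidly-growing supports, form its average in the appropriate normalization, and dualize through a well-chosen element of $K$ built from $\mathcal{A}_p$-admissible functionals to obtain the lower $\ell_p$-estimate $\theta_p\bigl(\sum |a_i|^p\bigr)^{1/p}$; the matching upper estimate comes directly from the submultiplicative nature of the norming operations, producing an isomorphic copy of $\ell_p^d$ for arbitrary $d$.

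The main obstacle is the upper-bound half of (i): showing that no $\ell_p$ with $p\notin F$ is finitely block represented in any block subspace of $X$. This requires a careful tree analysis of norming functionals. Given a normalized block sequence $(x_i)$ in $Y$ and scalars $(a_i)$ with $\bigl(\sum|a_i|^p\bigr)^{1/p}=1$, any $\phi\in K$ witnessing $\|\sum a_i x_i\|\geq\la$ decomposes along its tree of admissible operations. For each $q\in F$ one quantifies, via a standard extraction-and-averaging scheme, how much of $\phi$'s mass comes from $\mathcal{A}_q$-nodes, and then the interplay between $\theta_q$, the Schreier order, and the mismatch $|p-q|$ forces $\la$ to be bounded strictly below $1$. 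This is exactly the step where the discreteness of $F$ is crucial: between any $p\notin F$ and the nearest $q\in F$ there is a definite gap that the inductive loss exploits. The accumulation case $p_\omega$ is then handled by a separate diagonal argument showing that the losses vanish in the limit along $\mathcal{A}_{p_n}$, which recovers $\ell_{p_\omega}$-block representability but (by construction of the $\mathcal{A}_{p_n}$ with strictly growing orders) never as a genuine spreading model.

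Finally, parts (ii) and (iii) follow by re-running the averaging arguments inside spreading-model extractions. For any normalized block sequence in $Y$ one passes to a subsequence whose spreading model exists; the upper-bound tree analysis forces it to be $1$-equivalent to the $\ell_p$-basis for some $p\in F$, and the lower-bound averaging produces, for every $p\in F$ (respectively $p\in F\setminus\{p_\omega\}$ in case (iii)), an explicit block sequence whose spreading model is $\ell_p$. The exclusion of $\ell_{p_\omega}$ as a spreading model in case (iii) is the mirror image of the impossibility of $\ell_p$ for $p\notin F$: no single Schreier order suffices, and any finite-complexity witness falls short by a positive amount.
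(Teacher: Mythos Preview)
Your outline has the right flavor---a norming set closed under weighted $\ell_{p'}$-combinations indexed by $p\in F$---but it is missing the mechanism that actually makes the construction work, and without it the subsequent arguments cannot be carried out.

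The decisive ingredient in the paper is a \emph{hierarchical constraint structure}, not a family of parallel Schreier-type operations. The $\ell_{p_{\xi_0}}$ operation (order~$0$, the weakest norm) is unconstrained, while for $1\leqslant k<\xi_0$ the $\ell_{p_k}$ operation may only be applied to an \emph{admissible and very fast growing} sequence of functionals, each already of order strictly less than $k$; ``very fast growing'' is defined via an auxiliary \emph{size} function attached to every functional. This nesting is exactly what prevents the strongest combination $\ell_{p_1}$ from acting on arbitrary block vectors and wiping out the weaker $\ell_{p_\zeta}$ structures. In your scheme the operations are parallel, gated only by Schreier admissibility of increasing order; then the $\ell_{p_1}$ operation (which you give the lowest Schreier order) is available on every $\mathcal{S}_1$-admissible block sequence, and there is no evident reason any block subspace would admit an $\ell_{p_\zeta}$ spreading model for $\zeta>1$, nor any way to rule out intermediate $p$. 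The paper's entire spreading-model analysis runs through the $\alpha_{<k}$-indices, which measure precisely whether very-fast-growing sequences of low-order functionals can act nontrivially on a given block sequence; this machinery has no analogue in a pure Schreier-order setup.

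For the exclusion of $p\notin F$ the paper does not use a generic tree-mass argument. One fixes $p_k<p<p_{k+1}$, takes a long block sequence $(1+\varepsilon)$-equivalent to the $\ell_p$-basis, and first shows (via the order hierarchy) that any nearly-norming functional must have order between $1$ and $k$. One then splits this functional along its very-fast-growing decomposition, isolates the pieces supported past the first block vector, and \emph{reassembles} these pieces from many such functionals into a single new order-$k$ functional in $W$---legitimacy of this reassembly uses admissibility and the growth condition in an essential way. Applied to a suitable sub-sum, the new functional yields an $\ell_{p_k}$ lower bound that contradicts the assumed $\ell_p$ equivalence. Your ``quantify how much mass comes from $\mathcal{A}_q$-nodes and exploit the gap $|p-q|$'' does not point toward this reassembly step, which is the heart of the proof.

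Finally, you have misread part~(iii): it does \emph{not} exclude $\ell_{p_\omega}$ as a spreading model; it merely declines to assert that $\ell_{p_\omega}$ is admitted in every subspace. In the paper the basis itself admits only $\ell_{p_{\xi_0}}$ as a spreading model, so your last paragraph is aimed at the wrong conclusion.
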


This theorem is somewhat surprising in that the corresponding question for finite representability instead of finite block representability is very different.  
Indeed, if $\ell_p$ is finitely representable in a Banach space $X$ for some $1\leqslant p<2$ then $\ell_r$ is finitely representable in $X$ for all $r\in [p,2]$.  However, for $2<p<\infty$ the Banach space $\ell_r$ is finitely representable in $\ell_p$ if and only if $r=2$ or $r=p$. Thus the position in $[1,\infty]$ of the set $F$ of $p$'s that are finitely represented in a space $X$ determines whether $F$ is an interval.
 
Our results show that in the case of block finitely represented, the position of $F$ in the interval $[1,\infty]$ does not matter.

Theorem \ref{T:main} also solves several open questions on spreading models raised by G.
Androulakis, E. Odell, Th. Schlumprecht, and
 Tomczak-Jaegermann \cite{AOST}.  They asked in particular the following three questions: Does there exist a Banach
space so that every subspace has exactly $n$  many different spreading models? 
Does there exist a Banach space so that every subspace has exactly countably infinitely many different spreading models? If a Banach space admits
$\ell_1$ and $\ell_2$ spreading models in every 
subspace must it  admit uncountably many spreading models?
In \cite{AM2}, S.A. Arygros and the third author have constructed a space so that every subspace admits every 
unconditional basis as a spreading model. In
\cite{ABM}, S.A. Argyros with the first and third named authors created
a Banach space such that every infinite dimensional subspace admits
exactly two spreading models up to isomorphism, namely $\ell_1$ and
$c_0$.  Theorem \ref{T:main} includes the case that $F$ is an increasing sequence and its limit, and so it is natural to question if the case of a decreasing sequence is possible.  However, B. Sari proved that if a Banach space admits a countable collection of spreading models which
form a strictly increasing sequence in terms of domination, then the Banach space admits uncountably many spreading models \cite{S}.  Thus, Theorem \ref{T:main} (iii) would be impossible in the case that $F$ is a decreasing sequence and its limit as the spaces $\{\ell_p\}_{p\in F}$ would include an increasing sequence in terms of domination.

%Note that if $\ell_p$ is a spreading model of some basic sequence,
%then by \cite[Theorem 2.4]{R2} $\ell_p$ is finitely block represented in the sequence.
%Thus, one way to guarantee that a given $p\in[1,\infty]$ is in the
%stabilized Krivine set for a Banach space $X$ is to have $\ell_p$ be
%admitted as a spreading model in every infinite dimensional subspace
%of $X$. 

Given a Banach space $X$ with a basis, one may consider the set of all $p$'s, such that $\ell_p$ is admitted as a spreading model by $X$. Although this set may fail to coincide with the Krivine set of the space, or may even be empty \cite{OS1}, it is always contained in the Krivine set. Therefore, for a given subset $F$ of $[1,\infty]$ when constructing a space $X$, one way to ensure that $F$ is contained in any stabilized Krivine set of $X$, is to have $\ell_p$ admitted as a spreading model by every subspace of the space for every $p\in F$. For any single $1\leqslant p<\infty$, Tsirelson's method allows one to build a reflexive space not containing $\ell_p$ that is asymptotic $\ell_p$. Every spreading model admitted by this space is isomorphic to $\ell_p$ and the Krivine set of every infinite dimensional subspace is the singleton $\{p\}$. In this paper, for any finite set of $p$'s, we build a space with exactly these $\ell_p$'s hereditarily as spreading models and exactly these $p$'s hereditary as Krivine $p$'s. Moreover, for any increasing sequence of $p$'s we get almost the same result. In this case, the only caveat is that,  although the basis of the space $X$ admits only the limit $p$ as a spreading model, we did not prove that the limit $p$ is admitted in every subspace.

In the case of two distinct $p$'s,  our construction is rooted in the convexified Tsirelson's spaces in the sense of T. Figiel and W.B. Johnson's description
\cite{FJ} and the work of Odell and Schlumprecht \cite{OS1}, \cite{OS2}. The methods we follow are based on the ones from \cite{ABM}. In particular, for the simplest case
 of $F=\{1,\infty\}$, our construction reduces to a small modification of the space $\mathfrak{X}^1_{_{0,1}}$, which is the simplest case of the construction defined in that paper, and in which it is shown that $\mathfrak{X}^1_{_{0,1}}$ admits only $c_0$ and $\ell_1$ spreading models in every subspace. 
In recent literature, the spaces in \cite{OS1}, \cite{OS2}, \cite{ABM}, \cite{AM1} and \cite{AM2} are referred to as Tsirelson spaces with constraints or multi-layer Tsirelson spaces.  For the sake of understanding our construction in the simplest case $F=\{1,\infty\}$,
the norm satisfies the following implicit equation for $x \in c_{00}$:
\
$$\| x  \| =  \| x\|_0 \vee \sup  \frac{1}{4}\sum_{i=1}^n \| E_i x\|_{m_i},$$
where the supremum is over successive intervals $(E_i)_{i=1}^n$ and $(m_i)_{i=1}^n$  with 
$$  (\min E_i)_{i=1}^n \in \mathcal{S}_{1}, \min E_{i} > (\max E_{i-1})^2  \mbox{ and } m_i > \max E_{i-1},$$
and for each $m\inn$,
$$\|x\|_m = \frac{1}{4m} \sup \sum_{i=1}^m \| F_i x \|,$$
where the supremum is over successive intervals $(F_i)_{i=1}^m$. These $m$-norms and the way they are combined above are the previously mentioned contraints.   Since the constraints are based on averages, local and asymptotic $c_0$ structure occurs in every subsapce. Furthermore, in contrast to Tsirelson space, which has homogeneous asymptotic $\ell_1$ structure, the above construction hereditarily provides  both $\ell_1$ and $c_0$ local and asymptotic structure.

In the case that $F = \{p_1<\cdots<p_n\}\subset[1,\infty]$ we present a space $X$, admitting hereditarily $\ell_{p_1},\ldots,\ell_{p_n}$ asymptotic structure and nothing more. For this purpose we define a new norm, which has
 $n$-many layers, each one corresponding to an $\ell_{p_k}$ structure, for $k=1,\ldots,n$. The base layer corresponds to the $\ell_{p_n}$ norm, while for $k=1,\ldots,n-1$ the  $k^{th}$ layer corresponds to norm $\ell_{p_k}$ and it is defined using the previous layers. To avoid the domination of some of these layers over the rest,  to each of these layers, except for the basic one, some constraints have to be applied. The constraints are based on $p_n^\prime$-averages, where $p_n'$ is the conjugate exponent of $p_n$.

When $F$ consists of an increasing sequence $(p_k)_k$ and its limit $p_\omega$, countably many layers of the norm are used. In this case, the norm $\|\cdot\|_*$ of the space is defined through the following formulas.{ We state here the implicit equations for the norms for the sake of giving insight into our construction, but we will actually use a different definition in Section \ref{section def} in terms of norming functionals}. For $0< \theta \leqslant 1/4$ and $x\in c_{00}(\mathbb{N})$ we define:
\begin{equation*}
\|x\|_\omega = \theta\sup\left(\sum_{q=1}^d\|E_qx\|_*^{p_\omega}\right)^{1/p_\omega}\;\text{and}\quad
\|x\|_{0,m} = \theta\sup\frac{1}{m^{1/p_\omega^\prime}}\sum_{q=1}^m\|E_qx\|_*
\end{equation*}
where both suprema are are taken over all $d \inn$ and  successive intervals $(E_q)_{q=1}^d$ of the natural numbers. These $\|\cdot\|_{0,m}$ norms are the constraints applied to the norm of the space.
If for some $k> 0$ the norms $\|\cdot\|_{i,m}$ have been defined for every $0 \leqslant i < k$ and $m\inn$, for $x\in c_{00}(\mathbb{N})$ and $m\inn$ we define:
\begin{equation*}
\|x\|_{k,m} = \theta \sup \left(\sum_{q=1}^d\|E_qx\|_{i_q,m_q}^{p_{k}}\right)^{1/p_{k}}
\end{equation*}
where the supremum is taken over all $d\in\mathbb{N}$, $0 \leqslant i_q <k$ for $q = 1,\ldots,d$ and $(E_q)_{q=1}^d$, $(m_q)_{q=1}^d$ which satisfy certain growth conditions depending on $m$. The norm of the space satisfies the following implicit equation:
\begin{equation*}
\|x\|_* = \max\left\{\|x\|_\infty,\;\|x\|_\omega,\sup\{\|x\|_{k,m}:\;k,m\in\mathbb{N}\}\right\}.
\end{equation*}
Using the above description of the norm, it is easy to see that any block  sequence in our space satisfies a lower $\ell_{p_\omega}$ estimate with constant $\theta$. Likewise, in section \ref{section wiggles} we prove that any block sequence satisfies an upper $\ell_{p_1}$ estimate with constant $2$. In the case $F$ is finite with $F = \{p_1<\cdots<p_n\}$ then the norm of the space satisfies the same formula, where $p_\omega$ is replaced with $p_n$.

The paper is organized as follows. In section 2 we give a few preliminary definitions. Section 3 contains the definition of the spaces. In section 4 we set notation that we will use in our subsequent evaluations and prove upper and lower estimates on normalized block sequences.  In sections 5 and 6 we prove the spaces have the desired spreading model structure. Finally, in section 7 we show that in every block subspace the only Krivine $p$'s are those admitted as spreading models.

The majority of the research included in this paper was conducted while the first two authors where visiting the National Technical University of Athens.  We sincerely thank Spiros  Argyros for his hospitality and enlightening conversations.

\section{ Preliminaries} 
We begin with some preliminary definitions. Two basic sequences $(x_i)$ and $(y_i)$ are {\em$C$-equivalent} for
some $C\geqslant 1$ if $\sqrt{C}^{-1}\|\sum a_i x_i\|\leqslant\|\sum a_i
y_i\|\leqslant \sqrt{C}\|\sum a_i x_i\| $ for all scalar sequences
$(a_i)$. A basic sequence $(e_i)_{i=1}^\infty$ is {\em finitely
block represented} in a basic sequence $(x_i)_{i=1}^\infty$ if for
all $N\in\N$ and $\vp>0$ there exists a finite block sequence
$(y_i)_{i=1}^N$ of $(x_i)_{i=1}^\infty$ which is
$(1+\vp)$-equivalent to $(e_i)_{i=1}^N$.   For $1\leqslant p\leqslant\infty$,
we say that $\ell_p$ is {\em finitely block represented} in
$(x_i)_{i=1}^\infty$ if the unit vector basis of $\ell_p$ is
finitely block represented in $(x_i)_{i=1}^\infty$ (where we use the
case $p=\infty$ to mean $c_0$).

We say that a basic sequence $(e_i)_{i=1}^\infty$ is a {\em
spreading model} of a basic sequence $(x_i)_{i=1}^\infty$ if for all
finite sequences of scalars $(a_i)_{i=1}^n$ we have that
$$\|\sum_{i=1}^n a_i
e_i\|=\lim_{t_1\rightarrow\infty}\cdots\lim_{t_n\rightarrow\infty}\|\sum_{i=1}^n
a_i x_{t_i}\|.
$$
We say that a Banach space $X$ {\em admits} $(e_i)_{i=1}^\infty$ as
a spreading model if $(e_i)_{i=1}^\infty$  is equivalent to a spreading model of
some basic sequence in $X$. We say that $X$ admits $\ell_p$
as a spreading model if $X$ admits a spreading model
equivalent to the unit vector basis for $\ell_p$.  In the
literature, the basic sequence $(e_i)_{i=1}^\infty$ as well as the
Banach space formed by its closed span are both often referred to as
spreading models.

\section{The definition of the space $X$.}\label{section def}

In this section we give the definition of the norming set of the space $X$. We apply a variation of the method of saturation under constraints, introduced by Odell and Schlumprecht in \cite{OS1}, \cite{OS2}. The way this method is applied is similar to the one in \cite{ABM} and it allows $\ell_p$ structure to appear hereditarily in the space, for a predetermined set of $p$'s, which is either finite or consists of an increasing sequence and its limit.

\begin{ntt}
Let $G$ be a subset of $c_{00}(\mathbb{N})$.
\begin{enumerate}

\item[(i)] A finite sequence $(f_q)_{q=1}^d$ of elements of $G$
 will be called {\em admissible} if $f_1 <\cdots < f_d$
and $d \leqslant \min\supp f_1$.

\item[(ii)] Assume that 
$(f_q)_q$ is a sequence of functionals in $G$, and each functional $f_q$ has been assigned a positive integer $s(f_q)$, called the size of $f_q$.  Then $(f_q)_q$ will be called {\em very fast growing } if $(\max\supp f_{q-1})^2 < \min\supp f_q$ and $s(f_q) > \max\supp f_{q-1}$ for all $q>1$.

\end{enumerate}
\end{ntt}

%{\em (I somehow have the impression that we need a contstant $1/7$).}

% Let $\xi_0 \geqslant 2$ either be a finite ordinal number, or be equal to $\omega$ and $\phi:[1,\xi_0]\rightarrow [0,\infty]$ be an order preserving injection with a closed image $F$, i.e. either $F$ is finite or, if $\xi_0 = \omega$, then $\phi(\xi_0.) = \sup_{k<\xi_0}\phi(k)$. Set $\phi(\zeta) = p_\zeta$ for all $\zeta\in[1,\xi_0]$ and for $\zeta\in[1,\xi_0]$ we denote by $p_\zeta^\prime$ the conjugate of $p_\zeta$. Note that the function $\psi:[1,\xi_0]\rightarrow [0,\infty]$ with $\psi(\zeta) = p_\zeta^\prime$ is an order reversing injection.

Let $\xi_0 \in [2,\omega]$ and let $F=\{p_k :1 \leqslant k <\xi_0\} \cup \{p_{\xi_0}\} \subset [1, \infty]$ with $p_k \uparrow p_{\xi_0}$ in the case that $\xi_0=\omega$ and $p_1< p_2 < \cdots <p_{\xi_0-1} < p_{\xi_0}$ otherwise.

We now define the norming set of the space $X$. We do so inductively by defining an increasing sequence of subsets of $c_{00}(\mathbb{N})$. To some of the functionals that we construct we shall assign an order, a size or both. Fix a positive real number $0<\theta\leqslant 1/4$. Let $W_0 = \{ \pm e_j^* \}_{j \inn}$. To the functionals in $W_0$ we don't assign an order or size.  Assume that for some $m\inn\cup\{0\}$ the set $W_m$ has been defined, below we describe how the set $W_{m+1}$ is defined.

\subsection*{Functionals of order-$0$, or of order-$\xi_0$.}
Define
$$W^0_{m+1} = \left\{ \theta\sum_{q=1}^d c_qf_q : ~f_1 < \cdots < f_d \in W_m,\; \sum_{q=1}^d|c_q|^{p_{\xi_0}^\prime}\leqslant 1\right\}.$$

A functional $f = \theta\sum_{q=1}^d c_qf_q$ as above will be called of order-$0$. In some cases, for convenience these functionals shall also be referred to as functionals of order-$\xi_0$.

If a functional $f$ of order-$0$ has the form $f = \theta\sum_{q=1}^d (1/n)^{1/{p_{\xi_0}^\prime}}f_q$ with $d\leqslant {n}$, then the size of $f$ is defined to be $s(f) = {n}$. If a functional $f$ of order-$0$ is not of this form then we do not assign any size to it.

If $m + 1=1$ then we define $W_{1} = W_0\cup W_1^0$ , otherwise $m+1 \geqslant 2$ and we shall include more functionals in $W_{m+1}$, as described below.

\subsection*{Functionals of order-$k$, with $1\leqslant k <\xi_0$.}
Define
\begin{equation*}
\begin{split}
W_{m+1}^k = \Bigg\{& \theta \sum_{q=1}^d c_q f_q: ~\sum_{q=1}^d |c_q|^{p_{k}^\prime}\leqslant 1, ~(f_q)_{q=1}^d \mbox{ is an admissible and very}\\
& \mbox{fast growing sequence of functionals in $W_m$, each one}\\
&\mbox{of which has order strictly smaller than $k$}\Bigg\}.
\end{split}
\end{equation*}
A functional $f = \theta\sum_{q=1}^d c_qf_q$ as above will be called of order-$k$ with size $s(f) = \min\{s(f_q):q=1,\ldots,d\}$.

If $k = 1$ and $p_1 = 1$, we replace the condition $\sum_{q=1}^d |c_q|^\pon\leqslant 1$ with the condition $\max\{|c_q|:\;q=1,\ldots,d\}\leqslant 1$.
Note that if $\xi_0$ is finite and $\xi_0 = k_0 + 1$, then very fast growing sequences of functionals of order-$k_0$ are not used. Observe also that some functionals may be of more than one order or have multiple sizes, however, this shall not cause any problems.

If $m+1\geqslant 2$, let $W_{m+1} = \left(\cup_{0\leqslant k<\xi_0} W_{m+1}^k\right)\cup W_m$ and $W= \cup_{m=0}^\infty W_m.$ The space $X$ is the completion of $c_{00}(\mathbb{N})$ under the norm induced by $W$, i.e. for $x\in c_{00}(\mathbb{N})$ the norm of $x$ is equal to $\sup\{|f(x)|:\;f\in W\}$.

\begin{rem}
The following are clear from the definition of the norming set.
\begin{enumerate}

\item[(i)] For every $f_1<\cdots<f_d$ in $W$ and real numbers $(c_q)_{q=1}^d$ with $\sum_{q=1}^d|c_{q_0}|^{\pze}\leqslant 1$, the functional $f = \theta\sum_{q=1}^dc_qf_q$ is also in $W$.

\item[(ii)] For every $1\leqslant k < \xi_0$ and every admissible and very fast growing $f_1<\cdots<f_d$ in $W$, each one of which has order strictly smaller than $k$ and real numbers $(c_q)_{q=1}^d$ with $\sum_{q=1}^d|c_{q_0}|^{p_k^\prime}\leqslant 1$, the functional $f = \theta\sum_{q=1}^dc_qf_q$ is also in $W$.

\end{enumerate}
\end{rem}

\begin{rem}
For every $f\in W$ and  subset $E$ of the natural numbers, we have that $f|_E$, the restriction of $f$ onto $E$, is also in $W$. In particular, if $f$ is of order-$k$, then $f|_E$ is also of order-$k$ and $s(f|_E) \geqslant s(f)$. One can also check that the norming set $W$ is closed under changing signs, i.e. if $f\in W$ and $g$ is such that $|f| = |g|$, then $g$ is also in $W$. Therefore, the unit vector basis of $c_{00}(\mathbb{N})$ forms a $1$-unconditional basis for $X$.
\end{rem}

Recall that functionals of order-$0$ are also called functionals of order-$\xi_0$.
\begin{rem}\label{useful for inductive proofs on the norming set and other such stuff}
For every $m>0$, $1\leqslant \zeta \leqslant \xi_0$ and $f\in W_m$, which is of order-$\zeta$, there exist $f_1 < \cdots < f_d$ in $W_{m-1}$ and real numbers $c_1,\ldots,c_d$ with $\sum_{q=1}^d|c_q|^{p_\zeta^\prime}\leqslant 1$ such that $f = \theta\sum_{q=1}^dc_q f_q$. If moreover $\zeta = k < \xi_0$, then $(f_q)_{q=1}^d$ is an admissible and very fast growing sequence of functionals, each one of which has order strictly smaller than $k$.
\end{rem}

Before proceeding to the study of the properties of the space $X$, let us briefly explain the ingredients of the norming set $W$, without getting into too many details. If $1<\xi_0\leqslant \omega$ and we have determined a set $F = \{p_1<\cdots<p_{\xi_0}\}\subset[1,\infty]$, then every element $f$ of the norming set falls into one of the following three categories:
\begin{itemize}

\item[(i)] The functional $f$ is an element of the basis, i.e. $f\in\{\pm e_i\}_i$.

\item[(ii)] The functional $f$ is of order-$0$, i.e.
$ f = \theta\sum_{q=1}^d c_q f_q $
where $f_1<\cdots<f_d$ can be any successive elements of the norming set, combined with coefficients $(c_q)_q$ in the unit ball of $\ell_{p_{\xi_0}'}$.

\item[(iii)] The functional $f$ is of order-$k$ with $1\leqslant k < \xi_0$, i.e.
$ f = \theta\sum_{q=1}^d c_q f_q $
where the sequence $f_1<\cdots<f_d$ are successive elements of the norming set satisfying certain constraints, while the coefficients $(c_q)_q$ are in the unit ball of $\ell_{p_k'}$.
\end{itemize}

The functionals of order-$0$ provide $\ell_{p_{\xi_0}}$ structure to the space and, since the $\ell_{p_{\xi_0}}$ is the smallest of the $\ell_{p}$ norms for $p\in F$, their construction  is not subject to any constraints. On the other hand, for $1\leqslant k < \xi_0$, the functionals of order $k$ provide $\ell_{p_k}$ structure. One has to define these functionals carefully, in order not to demolish the desired $\ell_{p_\zeta}$ structure, for $k<\zeta\leqslant\xi_0$. This is the role of the constraints, which become more restrictive as $k$ becomes smaller.

One can verify that the norm induced by the norming set $W$ is alternatively described by the implicit formula given in the introduction.

%Before proceeding to the next section let us make some comments concerning the constraints used in the norming set.
%Observe that functionals of order-$k$ are defined using coefficients with $\ell_{p_{k^\prime}}$-norm at most one. Their size however is defined using a smaller norm.

% In particular, if $k=0$, then the size is defined using $\ell_{p_n^\prime}$ coefficients. The purpose of this is to have coefficients with $\ell_{p_{0}^\prime}$-norm one, while their $\ell_{p_{k}^\prime}$-norm is small for every $0<k\leqslant n$. This is needed because no constraints are applied when defining order-$0$ functionals.

% Since the $p_1^\prime > p_k^\prime$ for $k\neq 1$, if a set of coefficients has $\ell_{p_1^\prime}$-norm one, then it cannot have smaller $\ell_{p_k^\prime}$-norm for $k\neq 1$. For reasons that become apparent when trying to pass from $\ell_{p_1}$ to $\ell_{p_2}$ spreading models, a different formula has to be used for defining the size functionals of order-$1$.

% For the non-extreme cases, i.e. for $\ell_{p_k}$ with $2\leqslant k \leqslant n$, the norm used to define the size only needs to be strictly dominated by the  $\ell_{p_k}$-norm.

\section{Basic norm evaluations on block sequences of $X$.}\label{section wiggles}

In this section we prove a simple, but useful, lemma and we also prove that block sequences in $X$ have an upper $\ell_{p_1}$ estimate and a lower $\ell_{p_{\xi_0}}$ estimate. We start with some notation, which in conjunction with the next lemma, will be used frequently throughout the paper.  Here, the range of a vector is the smallest closed interval containing the support.

\begin{ntt}
Let $x_1 < \cdots < x_m$ be a finite block sequence in $X$ and $f = \theta\sum_{q=1}^d c_qf_q$ be a functional of order-$\zeta$, $1\leqslant \zeta \leqslant \xi_0$. Define the following :
\begin{eqnarray*}
%j_0 &=& \min\{j\in\{1,\ldots,m\}: \ran f\cap \ran x_j\neq\varnothing\},\\
A_1 &=& \left\{q\in\{1,\ldots,d\}: \ran f_q\cap\ran x_j\neq\varnothing\;\text{for at most one}\; 1\leqslant j\leqslant m\right\},\\
A_2 &=& \{1,\ldots,d\}\setminus A_1,\\
B &=& \{j\in\{1,\ldots,m\}:\;\text{there exists}\;q\in A_1\;\text{with}\;\ran f_q\cap\ran x_j\neq\varnothing\}\\
A_1^j &=& \{q\in A_1: \ran f_q\cap\ran x_j\neq\varnothing\}\;\text{for}\; j\in B,\\
C_j &=& \left\|(c_q)_{q\in A_1^j}\right\|_{\ell_{p_\zeta^\prime}}\;\text{for}\; j\in B,\\
g_j &=& \theta\sum_{q\in A_1^j}(c_q/C_j)f_q\;\;\text{for}\; j\in B\;\text{and}\\
E_q &=& \{j\in\{1,\ldots,m\}:\;\ran f_q\cap \ran x_j\neq\varnothing\},\;\text{for}\;q\in A_2.
\end{eqnarray*}
\end{ntt}

The following lemma follows immediately from our choice of notation.  As in Remark \ref{useful for inductive proofs on the norming set and other such stuff}, here we also use the fact that order-$0$ functionals can be referred to as order-$\xi_0$ functionals.

\begin{lem}
Let $x_1 < \ldots < x_m$ be a finite block sequence in $X$ and $f = \theta\sum_{q=1}^dc_qf_q$ be a functional of order-$\zeta$ for some $1\leqslant \zeta \leqslant \xi_0$.
The functionals $(g_j)_{j\in B}$ are order-$\zeta$ functionals in $W$, we have that $(\sum_{j\in B}C_j^{{p_\zeta^\prime}})^{1/{p_\zeta^\prime}}\leqslant 1$, and the following holds:
\begin{equation}\label{E1}
\left|f\left(\sum_{j=1}^mx_j\right)\right| \leqslant \left(\sum_{j\in B}C_j|g_j(x_j)|\right) + \theta\left(\sum_{q\in A_2}|c_q|\cdot\left|f_q\left(\sum_{j\in E_q}x_j\right)\right|\right).
\end{equation}

Moreover, if $A_2 = \{q_1<\cdots<q_r\}$ then $\max E_{q_i} \leqslant \min E_{q_{i+1}}$ for $i=1,\ldots,r-1$ and  $\max E_{q_i} < \min E_{q_{i+2}}$ for $i=1,\ldots,r-2$.\label{action analysis}\label{some details} Thus, for each $1\leqslant j\leqslant m$ there exists at most two sets $E_q$ such that $x_j\in E_q$.
\end{lem}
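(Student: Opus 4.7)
The plan is to verify the four assertions---$g_j \in W$ is of order-$\zeta$, the bound $(\sum_{j \in B} C_j^{p_\zeta'})^{1/p_\zeta'} \leqslant 1$, the displayed inequality, and the interleaving of the $(E_q)_{q \in A_2}$---each of which reduces to a direct bookkeeping argument from the definitions.

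For the first two, I would note that $(f_q)_{q \in A_1^j}$ is a subsequence of $(f_q)_{q=1}^d$ and hence inherits every structural condition imposed by the definition of an order-$\zeta$ functional: successive supports, the restriction on the orders of the components (when $\zeta = k < \xi_0$), and very fast growth. Admissibility is preserved because $|A_1^j| \leqslant d \leqslant \min\supp f_1 \leqslant \min\supp f_{\min A_1^j}$. The rescaled coefficients $(c_q/C_j)_{q \in A_1^j}$ have $\ell_{p_\zeta'}$-norm exactly $1$ by the definition of $C_j$, with the obvious modification in the degenerate case $p_1 = 1$ (where one works with the supremum). Remark~\ref{useful for inductive proofs on the norming set and other such stuff} then places $g_j$ in $W$ as an order-$\zeta$ functional. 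For the total bound on the $C_j$, I would observe that each $q \in A_1$ meets the range of a unique $x_j$, so $\{A_1^j\}_{j \in B}$ partitions $A_1$, giving $\sum_{j \in B} C_j^{p_\zeta'} = \sum_{q \in A_1} |c_q|^{p_\zeta'} \leqslant 1$.

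The displayed inequality is immediate once one splits
\[
f\Big(\sum_{j=1}^m x_j\Big) = \theta\sum_{q \in A_1} c_q f_q\Big(\sum_j x_j\Big) + \theta\sum_{q \in A_2} c_q f_q\Big(\sum_j x_j\Big).
\]
Each $f_q$ with $q \in A_1$ acts on the unique $x_j$ with $q \in A_1^j$, and regrouping by $j \in B$ produces $\sum_{j \in B} C_j g_j(x_j)$ via the definition of $g_j$; each $f_q$ with $q \in A_2$ annihilates $x_j$ for $j \notin E_q$, yielding $f_q\big(\sum_{j \in E_q} x_j\big)$. The triangle inequality then delivers the stated bound.

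The main obstacle is the interleaving of the $(E_q)_{q \in A_2}$, which I would handle from the order of ranges. For $\max E_{q_i} \leqslant \min E_{q_{i+1}}$, assume for contradiction that $j_0 := \max E_{q_i}$ exceeds $j_1 := \min E_{q_{i+1}}$: then $x_{j_1}$ lies strictly to the left of $x_{j_0}$, yet $x_{j_0}$ reaches into $\ran f_{q_i}$ (which lies to the left of $\ran f_{q_{i+1}}$) while $x_{j_1}$ reaches into $\ran f_{q_{i+1}}$, contradicting $f_{q_i} < f_{q_{i+1}}$. To upgrade to the strict inequality $\max E_{q_i} < \min E_{q_{i+2}}$, I would chain the weak bound into $\max E_{q_i} \leqslant \min E_{q_{i+1}} \leqslant \max E_{q_{i+1}} \leqslant \min E_{q_{i+2}}$; if equality held throughout, then $E_{q_{i+1}}$ would be a singleton, contradicting the defining requirement $|E_{q_{i+1}}| \geqslant 2$ for membership in $A_2$. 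The final ``thus'' statement follows at once: if $j \in E_{q_a} \cap E_{q_c}$ with $c \geqslant a + 2$, one gets $j \leqslant \max E_{q_a} < \min E_{q_c} \leqslant j$, which is absurd.
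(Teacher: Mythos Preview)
Your proof is correct and fills in precisely the details the paper omits: the paper states that the lemma ``follows immediately from our choice of notation'' and gives no argument, so there is nothing to compare against beyond confirming that your verification matches the intended reading of the definitions. One tiny imprecision: the sets $\{A_1^j\}_{j\in B}$ need not partition all of $A_1$ (some $q\in A_1$ may miss every $\ran x_j$), but they are pairwise disjoint and their union is contained in $A_1$, which is all you need for $\sum_{j\in B} C_j^{p_\zeta'}\leqslant 1$.
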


Note that  applying H\"older's inequality to \eqref{E1}, gives the following inequality, which in most cases will be more convenient for us than \eqref{E1}.
\begin{equation}\label{E2}
\left|f\left(\sum_{j=1}^m x_j\right)\right| \leqslant \left\|\left(g_j(x_j)\right)_{j\in B}\right\|_{\ell_{p_{\zeta}}} + \theta\Big\|\Big(f_q(\sum_{j\in E_q}x_j)\Big)_{q\in A_2}\Big\|_{\ell_{p_{\zeta}}}.
\end{equation}

%\begin{lem}
%Let $x_1 < \ldots < x_m$ be a finite block sequence in $X$ and $f = \theta\sum_{q=1}^df_q$ be a functional of order-$k$, $0\leqslant k \leqslant n$.
%\begin{enumerate}

%\item  {\em (perhaps merge this with the previous lemma?)}

%\item Moreover,  $\#A_1^j\leq \max\supp x_{j_0}$ for all $j\in B$ and if $1\leqslant k\leqslant n$ and $B = \{j_1 <\cdots<j_\ell\}$  then $s_{k-1}(g_q) > \min\supp x_{j_{i-1}}$ for every $q\in A_1^{j_i}$ with $j_i > j_0$. {\em I put this here but now I think it is redundant}

%\end{enumerate}\label{some details}
%\end{lem}

\begin{prop}\label{general estimate}
Let $x_1 < \cdots < x_m$ be a normalized finite block sequence in $X$ and $(\lambda_j)_{j=1}^m$ be scalars. The following holds:
\begin{equation*}
\theta \|(\lambda_j)_j\|_{\ell_{p_{\xi_0}}} \leqslant \bigg\|\sum_{j=1}^m \lambda_j x_j\bigg\| \leqslant 2 \|(\lambda_j)_j\|_{\ell_{p_1}}.
\end{equation*}
\end{prop}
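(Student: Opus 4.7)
The plan is to prove the two estimates separately and independently. For the lower bound $\theta\|(\lambda_j)\|_{\ell_{p_{\xi_0}}}\le\|\sum\lambda_jx_j\|$, the idea is to exhibit an order-$0$ (equivalently order-$\xi_0$) witnessing functional via $\ell_{p_{\xi_0}'}$-$\ell_{p_{\xi_0}}$ duality. Fix $\varepsilon>0$. For each $j$ pick a norming functional $f_j\in W$ with $|f_j(x_j)|>1-\varepsilon$; using the fact (noted in the remarks after the definition) that $W$ is closed under restriction and sign changes, replace $f_j$ by its restriction to $\ran x_j$ together with a sign so that $f_j(\mathrm{sign}(\lambda_j)x_j)>1-\varepsilon$. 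Then set $c_j=\mathrm{sign}(\lambda_j)|\lambda_j|^{p_{\xi_0}-1}/\|(\lambda_j)\|_{\ell_{p_{\xi_0}}}^{p_{\xi_0}/p_{\xi_0}'}$, so that $\sum_j|c_j|^{p_{\xi_0}'}=1$ and $f=\theta\sum_jc_jf_j$ is an order-$0$ functional in $W$. By construction $f(\sum_j\lambda_jx_j)\ge\theta(1-\varepsilon)\|(\lambda_j)\|_{\ell_{p_{\xi_0}}}$, and letting $\varepsilon\to0$ completes the lower bound.

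For the upper bound, the plan is to prove by induction on $m$ that for every $f\in W_m$ one has $|f(\sum\lambda_jx_j)|\le 2\|(\lambda_j)\|_{\ell_{p_1}}$. The base case $f=\pm e_i^*\in W_0$ is immediate. For the inductive step write $f=\theta\sum_{q=1}^dc_qf_q$ of some order-$\zeta$ with $f_q\in W_{m-1}$, and apply inequality \eqref{E2} from Lemma \ref{action analysis} with the split $A_1\cup A_2$. The first term is bounded via H\"older's inequality: since $\|(C_j)_{j\in B}\|_{\ell_{p_\zeta'}}\le1$ and $|g_j(\lambda_jx_j)|\le|\lambda_j|$, we obtain
\begin{equation*}
\sum_{j\in B}C_j|g_j(\lambda_jx_j)|\le\|(\lambda_j)_{j\in B}\|_{\ell_{p_\zeta}}\le\|(\lambda_j)\|_{\ell_{p_1}},
\end{equation*}
where the last inequality uses the embedding $\ell_{p_1}\hookrightarrow\ell_{p_\zeta}$ of sequence spaces (which holds since $p_1\le p_\zeta$).

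The second term is handled using the inductive hypothesis applied to each $f_q$ acting on the block sequence $(\lambda_jx_j)_{j\in E_q}$, yielding $|f_q(\sum_{j\in E_q}\lambda_jx_j)|\le 2\|(\lambda_j)_{j\in E_q}\|_{\ell_{p_1}}$. Applying H\"older with $\|(c_q)\|_{\ell_{p_\zeta'}}\le1$ followed by the embedding $\ell_{p_1}\hookrightarrow\ell_{p_\zeta}$ and the crucial combinatorial fact from the last sentence of Lemma \ref{action analysis} that each index $j$ lies in at most two sets $E_q$, one estimates
\begin{equation*}
\theta\sum_{q\in A_2}|c_q|\cdot\Big|f_q\Big(\sum_{j\in E_q}\lambda_jx_j\Big)\Big|\le 2\theta\Big(\sum_{q\in A_2}\|(\lambda_j)_{j\in E_q}\|_{\ell_{p_1}}^{p_1}\Big)^{1/p_1}\le 2\theta\cdot 2^{1/p_1}\|(\lambda_j)\|_{\ell_{p_1}}.
\end{equation*}
Since $\theta\le 1/4$ and $2^{1/p_1}\le 2$, this is at most $\|(\lambda_j)\|_{\ell_{p_1}}$, so adding the two contributions yields $2\|(\lambda_j)\|_{\ell_{p_1}}$ and closes the induction. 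The main subtlety is keeping the constants under control simultaneously across all orders $\zeta$; the argument works uniformly because $p_1$ is the smallest exponent in $F$ (giving the outer embedding) and $p_\zeta'\le p_1'$ makes H\"older go in the right direction, while the bound $4\theta\le1$ absorbs the overcounting factor from the two-overlap property.
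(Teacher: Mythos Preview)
Your proposal is correct and follows essentially the same route as the paper: the lower bound via an explicit order-$0$ witnessing functional built from norming functionals with $\ell_{p_{\xi_0}'}$ coefficients, and the upper bound by induction on the level $W_n$ using Lemma \ref{action analysis}, the inductive hypothesis on each $f_q$, the embedding $\ell_{p_1}\hookrightarrow\ell_{p_\zeta}$, the two-overlap property of the sets $E_q$, and $4\theta\le 1$ to absorb the constants. Two minor points: (i) you cite \eqref{E2} but your displayed estimates actually work from \eqref{E1} and then apply H\"older yourself---the mathematics is the same, and the paper simply quotes \eqref{E2} directly; (ii) your duality formula for $c_j$ tacitly assumes $p_{\xi_0}<\infty$, and the paper disposes of the case $p_{\xi_0}=\infty$ separately as trivial.
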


\begin{proof}
We first prove the lower inequality. Note that this is trivial in the case that $p_{\xi_0}=\infty$
, thus we assume that $p_{\xi_0}<\infty$. For each $j \in \{1, \ldots, m\}$ find $f_j$ so that $f_j(x_j)= 1$ and $\supp f_j = \supp x_j$.  Without loss of generality, we may assume that 
$(\sum_{i=1}^m |\lambda_i|^{p_{\xi_0}})^{1/p_{\xi_0}}=1$ and $\lambda_i\geq0$ for all $1\leqslant i\leqslant m$.  Thus, $\theta \sum_{j=1}^m |\lambda_j|^{p_{\xi_0}/p^\prime_{\xi_0}} f_j \in W$. Therefore
$$\bigg\|\sum_{j=1}^m \lambda_jx_j\bigg\| \geqslant \theta \sum_{j=1}^m |\lambda_j|^{p_{\xi_0}/p^\prime_{\xi_0}} f_j \bigg(\sum_{i=1}^m \lambda_i x_i\bigg) = \theta\bigg(\sum_{i=1}^m |\lambda_i|^{p^\prime_{\xi_0}}\bigg)=\theta.$$
The upper inequality clearly follows from the following claim that we will prove by induction on $n\in\N$.  For all  $n \in \nn \cup \{0\}$ and $f \in W_n$  (see Remark \ref{useful for inductive proofs on the norming set and other such stuff}) we have 
\begin{equation}\label{IH}
f\bigg(\sum_{j=1}^m \lambda_j x_j\bigg) \leqslant 2\left(\sum_{j=1}^m|\lambda_j|^{p_1}\right)^{1/p_1}.
\end{equation}
The case of $f \in W_0=\{\pm e^*_j\}$ is trivial. Assume that the above holds for some $n \geqslant 0$. Let $f \in W_{n+1}$. Then $f=\theta \sum_{q=1}^d c_q f_q$ is of order-$\zeta$ and $f_1< \cdots < f_d$ are in in $W_{n}$ and $\sum_{q=1}^d |c_q|^{p_\zeta'} \leqslant 1$.

By \eqref{E2} after Lemma \ref{action analysis} then applying the inductive hypothesis, we obtain the following: 
\begin{eqnarray}
\left|f(\sum_{j=1}^m \lambda_j x_{j})\right| &\leqslant& \left(\sum_{j\in B}|\lambda_j|^{p_\zeta}\right)^{1/p_\zeta}\!\!\!\!\!\!\!\!\!\! + \theta\left(\sum_{q\in A_2}\left|f_q\left(\sum_{j\in E_q} \lambda_j x_{j}\right)\right|^{p_\zeta}\right)^{1/p_\zeta}\nonumber\\
&\leqslant& \left(\sum_{j\in B}|\lambda_j|^{p_\zeta}\right)^{1/p_\zeta} + 2\theta\left(\sum_{q\in A_2}\left(\sum_{j\in E_q}|\lambda_j|^{p_{1}}\right)^{p_\zeta/p_{1}}\right)^{1/p_\zeta}.\label{general estimate 1}
\end{eqnarray}
 By the last part of Lemma \ref{some details}, for each $j$ there exists at most two distinct $q\in A_2$ such that $j\in E_q$.  This fact together with $p_{1} \leqslant p_\zeta$ imply that
\begin{equation}
\left(\sum_{q\in A_2}\left(\sum_{j\in E_q}|\lambda_j|^{p_{1}}\right)^{p_\zeta/p_{1}}\right)^{1/p_\zeta} < 2^{1/p_{1}}\left(\sum_{j=1}^m|\lambda_j|^{p_{1}}\right)^{1/p_{1}}.\label{general estimate 2}
\end{equation}
Combining relations \eqref{general estimate 1} and \eqref{general estimate 2} together with $0<\theta\leqslant 1/4$, we obtain the desired bound in \eqref{IH}.

%---------------------------------------------------------------------------------------------------------------------------------------------------
% Sorry, I was impatient... I noticed that I has already used this argument a couple of times so I copy-pasted it and made some minor modifications.
%---------------------------------------------------------------------------------------------------------------------------------------------------

\end{proof}

\section{Spreading models of $X$.}

In this section we define the $\al$-indices in a very similar manner as they have been defined in \cite{ABM}, \cite{AM1} and \cite{AM2}. Although previously the $\al$-indices were used to describe the action of certain averages of functionals on a block sequence, in our case this is not exactly the same.  Here, the indices are used to study the action of functionals of a certain order on a block sequence. However, the principle is the same and we retain this notation. As is the case in these papers, the indices  determine the spreading models admitted by a block sequence in the space $X$. As a consequence we prove that every spreading model admitted by a weakly null sequence in $X$ must equivalent to the unit vector basis of $\ell_{p_\zeta}$ for some $\zeta\in[1,\xi_0]$.

\begin{defn}
Let $(x_j)_j$ be a block sequence in $X$ and let $1\leqslant k < \xi_0$. Assume that for every very fast growing sequence $(f_q)_q$ of functionals in $W$, each one of which has order strictly smaller than $k$, and every subsequence $(x_{j_i})_i$ of $(x_j)_j$ we have that $\lim_i|f_i(x_{j_i})| = 0$. Then we say that the $\al_{<k}$-index of $(x_j)_j$ is zero and write $\al_{<k}\{(x_j)_j\} = 0$. Otherwise we write $\al_{<k}\{(x_j)_j\} > 0$.
\end{defn}

\begin{rem}
Let $(x_j)_j$ be a block sequence in $X$ and $1\leqslant m < k < \xi_0$. If $\al_{<k}\{(x_j)_j\} = 0$ then also $\al_{<m}\{(x_j)_j\} = 0$.
\end{rem}

The following characterization has appeared in similar forms in \cite{ABM}, \cite{AM1} and \cite{AM2}. We omit the proof as it is simple and straightforward.

\begin{prop}
Let $1\leqslant k < \xi_0$ and $(x_j)_j$ be a block sequence in $X$. The following assertions are equivalent:
\begin{enumerate}

\item[(i)] $\al_{<k}\{(x_j)_j\} = 0$.

\item[(ii)] For every $\ee>0$ there exist $j_0, i_0\inn$ such that for every $f\in W$ of order strictly smaller than $k$ with $s(f) \geqslant i_0$ and every $j\geqslant j_0$ we have that $|f(x_j)| < \ee$.

\end{enumerate}
\label{index char}
\end{prop}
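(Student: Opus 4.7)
The proposition is the equivalence between the sequential statement (i), that $|f_q(x_{j_q})| \to 0$ along every very fast growing sequence of order-$<k$ functionals and every subsequence of $(x_j)_j$, and the uniform tail statement (ii). I would prove each direction separately, with (ii) $\Rightarrow$ (i) being essentially immediate and (i) $\Rightarrow$ (ii) using the restriction remark in a small but important way.

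For (ii) $\Rightarrow$ (i), fix any very fast growing sequence $(f_q)_q$ of functionals of order strictly smaller than $k$ and any subsequence $(x_{j_i})_i$. The very-fast-growing hypothesis forces $s(f_q) > \max \supp f_{q-1}$, and the support condition $(\max \supp f_{q-1})^2 < \min \supp f_q$ together with $\supp f_q \subset \nn$ makes $\max \supp f_{q-1} \to \infty$, so $s(f_q) \to \infty$ as well. Given $\ee > 0$, choose $j_0, i_0$ by (ii); for all sufficiently large $i$ one has simultaneously $s(f_i) \geqslant i_0$ and $j_i \geqslant j_0$, hence $|f_i(x_{j_i})| < \ee$, giving $\lim_i |f_i(x_{j_i})| = 0$.

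For (i) $\Rightarrow$ (ii) I would argue by contrapositive. If (ii) fails, there exists $\ee > 0$ such that for every $j_0, i_0 \inn$ one can find $f \in W$ of order $< k$ with $s(f) \geqslant i_0$ and some $j \geqslant j_0$ with $|f(x_j)| \geqslant \ee$. I would recursively build a very fast growing sequence $(f_q)_q$ of order-$<k$ functionals and a subsequence $(x_{j_q})_q$ with $|f_q(x_{j_q})| \geqslant \ee$ for every $q$, directly contradicting $\al_{<k}\{(x_j)_j\}=0$. Having chosen $f_1 < \cdots < f_{q-1}$ and $j_1 < \cdots < j_{q-1}$, set $N = \max \supp f_{q-1}$ (and $N = 0$ if $q=1$), pick $j_0 > j_{q-1}$ with $\min \supp x_{j_0} > N^2$, and set $i_0 = N+1$. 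The failure of (ii) yields some $f$ of order $<k$ with $s(f) > N$ and some $j \geqslant j_0$ with $|f(x_j)| \geqslant \ee$. The only subtle point is that this $f$ need not satisfy $\min \supp f > N^2$, which the very-fast-growing condition requires; this is precisely the role of the earlier remark that $W$ is closed under restrictions, that a restriction preserves the order, and that $s(f|_E) \geqslant s(f)$. Putting $E = (N^2, \infty)$ and $f_q := f|_E$, the choice of $j_0$ gives $\supp x_j \subset E$, so $f_q(x_j) = f(x_j)$ and $|f_q(x_j)| \geqslant \ee$, while $f_q \in W$ has order $<k$, $s(f_q) \geqslant s(f) > N$, and $\min \supp f_q > N^2$. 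Setting $j_q = j$ closes the induction, and the only "obstacle" worth noting is recognising that the restriction trick is what upgrades a large-size functional into one whose entire support is also pushed sufficiently far to the right; the rest is bookkeeping, consistent with the authors' description of the proof as straightforward.
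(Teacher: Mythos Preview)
Your argument is correct and is exactly the natural proof the authors have in mind; the paper omits the proof entirely as ``simple and straightforward,'' and your use of the restriction remark (order is preserved and $s(f|_E)\geqslant s(f)$) to push the support of the chosen functional past $N^2$ is precisely the point that makes the recursive construction go through. One cosmetic remark: you might note explicitly that $j\geqslant j_0$ together with $(x_j)_j$ being a block sequence gives $\min\supp x_j\geqslant\min\supp x_{j_0}>N^2$, so $\supp x_j\subset E$ as claimed.
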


\begin{lem}
Let $1\leqslant k < \xi_0$ and $(x_j)_j$ be a bounded  block sequence in $X$ such that $\al_{<k}\{(x_j)_j\} > 0$.  Then $(x_j)_j$ has a subsequence with a  spreading model  that dominates
the unit vector basis for $\ell_{p_k}$.  That is, there exists $\ee>0$ and  a subsequence $(x_{j_i})_i$ of $(x_j)_j$ such that for every natural numbers $m\leqslant i_1 < \cdots < i_m$ and every real numbers $\lambda_1,\ldots,\lambda_m$ the following holds:
\begin{equation*}
\left\|\sum_{t=1}^m\lambda_t x_{j_{i_t}}\right\| \geqslant \ee \left\|(\lambda_t)_t\right\|_{\ell_{p_k}}.
\end{equation*}\label{lower estimate}
\end{lem}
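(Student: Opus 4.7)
The plan is to unpack the hypothesis $\alpha_{<k}\{(x_j)_j\} > 0$ into a concrete witness and then, for each admissible $m\leqslant i_1<\cdots<i_m$ with coefficients $(\lambda_t)_{t=1}^m$, to assemble from that witness a single order-$k$ functional $f\in W$ whose action on $\sum_t\lambda_t x_{j_{i_t}}$ is comparable to $\|(\lambda_t)\|_{\ell_{p_k}}$.

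First I would extract the witness. The negation of the defining limit condition gives an $\varepsilon>0$, a very fast growing sequence $(f_q)_q$ of order-$<k$ functionals in $W$, and a subsequence $(x_{j_q})_q$ such that $|f_q(x_{j_q})|\geqslant\varepsilon$ for all $q$. Since very fast growing sequences satisfy $(\max\supp f_{q-1})^2<\min\supp f_q$, we have $\min\supp f_q\to\infty$, so after a further joint relabeling of $(x_{j_q})_q$ and $(f_q)_q$ I may assume $q\leqslant\min\supp f_q$ for every $q$. I then replace each $f_q$ by its restriction $f_q|_{\ran(x_{j_q})}$. By the restriction remark in Section~3 this keeps each functional in $W$ with the same order and (possibly larger) size, and one checks immediately that the very fast growing condition and the inequality $q\leqslant\min\supp f_q$ are preserved. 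The net effect is $\supp f_q\subseteq\ran(x_{j_q})$, so $f_q(x_{j_s})=0$ whenever $s\neq q$.

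Next, given $m\leqslant i_1<\cdots<i_m$ and scalars $\lambda_1,\ldots,\lambda_m$, I would use $\ell_{p_k}$–$\ell_{p_k'}$ duality to pick nonnegative $(c_t)_{t=1}^m$ with $\sum_t c_t^{p_k'}\leqslant 1$ (replaced by $\max_t c_t\leqslant 1$ in the special case $p_1=1$) and $\sum_t c_t|\lambda_t|=\|(\lambda_t)_t\|_{\ell_{p_k}}$, and signs $\epsilon_t=\operatorname{sgn}\!\bigl(\lambda_t f_{i_t}(x_{j_{i_t}})\bigr)$. Define
\begin{equation*}
f=\theta\sum_{t=1}^{m}\epsilon_t c_t f_{i_t}.
\end{equation*}
The sequence $(f_{i_t})_{t=1}^m$ is admissible because $m\leqslant i_1\leqslant\min\supp f_{i_1}$, it is very fast growing as a subsequence of a very fast growing sequence, and each $f_{i_t}$ has order strictly less than $k$; the coefficients lie in the appropriate unit ball. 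Hence $f\in W$ as an order-$k$ functional.

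Finally, using that $\supp f_{i_t}\subseteq\ran(x_{j_{i_t}})$ and the blocks have disjoint ranges,
\begin{equation*}
\left\|\sum_{t=1}^m\lambda_t x_{j_{i_t}}\right\|\geqslant f\!\left(\sum_{s=1}^m\lambda_s x_{j_{i_s}}\right)=\theta\sum_{t=1}^{m} c_t|\lambda_t|\,|f_{i_t}(x_{j_{i_t}})|\geqslant \theta\varepsilon\,\|(\lambda_t)_t\|_{\ell_{p_k}},
\end{equation*}
which yields the conclusion with constant $\theta\varepsilon$. The only delicate step is the bookkeeping that legitimizes $f$ as an order-$k$ functional: ensuring admissibility ($d\leqslant\min\supp f_1$), preservation of very fast growth after restriction, and compatibility with the $p_1=1$ modification. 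Everything else — the restriction closure of $W$, the duality computation, and the disjoint-support evaluation — is essentially automatic once the preparation in the first step has been carried out.
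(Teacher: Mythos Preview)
Your proof is correct and follows essentially the same approach as the paper's: extract the witnessing very fast growing sequence $(f_q)_q$ with $|f_q(x_{j_q})|\geqslant\varepsilon$, restrict each $f_q$ to $\ran x_{j_q}$, and then for any $m\leqslant i_1<\cdots<i_m$ combine $(f_{i_t})_{t=1}^m$ with suitable $\ell_{p_k'}$ coefficients into a single order-$k$ functional. The paper handles admissibility and signs more tersely (it simply asserts that the resulting functional is of order-$k$ and optimizes over the coefficient ball rather than picking explicit dual coefficients and signs), but the substance is identical, and your more explicit bookkeeping---checking that restriction preserves very fast growth, arranging $q\leqslant\min\supp f_q$, and the $p_1=1$ caveat---fills in exactly the details the paper leaves implicit.
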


\begin{proof}
By the definition of the $\al_{<k}$ index, there exists $\ee^\prime > 0$, a subsequence of $(x_j)_j$, again denoted by $(x_j)_j$ and a very fast growing sequence $(f_j)_j$ of functionals of order strictly smaller than $k$, such that $|f_j(x_j)| > \ee^\prime$ for all $j\inn$. We may also assume that $\ran f_j \subset \ran x_j$ for all $j\inn$.
Set $\ee = \theta\ee^\prime$ and note that for every $m \leqslant j_1 < \cdots <j_m$ and every real numbers $(c_t)_{t=1}^m$ with $\sum_{t=1}^m|c_t|^{p_k^\prime}\leqslant 1$ the functional $f = \theta\sum_{t=1}^mc_tf_{j_t}$ is of order-$k$. Let $m\leqslant j_1 < \cdots <j_m$ be natural numbers and $\lambda_1,\ldots,\lambda_m$ be real numbers. We have the following estimate:
\begin{eqnarray*}
\left\|\sum_{t=1}^m\lambda_t x_{j_t}\right\| &\geqslant& \sup\left\{\theta\sum_{t=1}^mc_tf_{j_t}\left(\sum_{t=1}^m\lambda_t x_{j_t}\right):\;\sum_{t=1}^m|c_t|^{p_k^\prime}\leqslant 1\right\}\\
&=&  \sup\left\{\theta\sum_{t=1}^m|c_t\lambda_t|\cdot|f_{j_t}(x_{j_t})|:\;\sum_{t=1}^m|c_t|^{p_k^\prime}\leqslant 1\right\}\\
&\geqslant & \theta\ee^\prime \sup\left\{\sum_{t=1}^m|c_t\lambda_t|:\;\sum_{t=1}^m|c_t|^{p_k^\prime}\leqslant 1\right\} \\
&=& \ee\left(\sum_{t=1}^m|\lambda_t|^{p_k}\right)^{1/p_k}.
\end{eqnarray*}
\end{proof}

\begin{lem}
Let $(x_j)_j$ be a normalized block sequence and $2\leqslant \zeta \leqslant \xi_0$  such that $\al_{<k}\{(x_j)_j\} = 0$ for every $1\leqslant k < \zeta$. 
 Then $(x_j)_j$ has a subsequence with a spreading model that is 2-dominated by the unit vector basis for $\ell_{p_\zeta}$. 
In particular, there exists  a subsequence $(x_{j_i})_i$ of $(x_j)_j$ such that for every natural numbers $m\leqslant i_1 < \cdots < i_m$ and every real numbers $\lambda_1,\ldots,\lambda_m$ the following holds:
\begin{equation*}
\left\|\sum_{t=1}^m\lambda_t x_{j_{i_t}}\right\| \leqslant 3\left\|(\lambda_t)_t\right\|_{\ell_{p_\zeta}}.
\end{equation*}\label{upper estimate}
\end{lem}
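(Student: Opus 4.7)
The plan is to extract a subsequence $(y_i):=(x_{j_i})_i$ of $(x_j)_j$ via a diagonal argument based on Proposition~\ref{index char} and then establish the upper $\ell_{p_\zeta}$-estimate by induction on the level $n$ of the norming construction. For the subsequence, fix a summable positive sequence $(\delta_i)_i$ with $\theta\sum_i\delta_i$ small. Proposition~\ref{index char}, applied at $\ee=\delta_i$ for each $k<\zeta$, provides thresholds $j_0(k,i),i_0(k,i)$; by a standard diagonal construction (with $k$ running diagonally when $\zeta=\omega$) choose $j_1<j_2<\cdots$ so that $(y_i)$ is a normalized block sequence with the property: for every $i\geqslant 2$, every $k$ with $1\leqslant k<\min(\zeta,i)$, and every $f\in W$ of order strictly less than $k$ with $s(f)\geqslant\min\supp y_{i-1}$, one has $|f(y_i)|<\delta_i$.

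We then prove by induction on $n\geqslant 0$ that for every $f\in W_n$, every $m\leqslant i_1<\cdots<i_m$, and all scalars $(\lambda_t)_{t=1}^m$,
\begin{equation*}
\Bigl|f\Bigl(\sum_{t=1}^m\lambda_t y_{i_t}\Bigr)\Bigr|\leqslant 3\|(\lambda_t)\|_{\ell_{p_\zeta}}.
\end{equation*}
The base $n=0$ is immediate. For the inductive step, write $f=\theta\sum_q c_q f_q\in W_{n+1}$ of order $\eta$ and apply Lemma~\ref{action analysis} together with \eqref{E2} to split $f(\sum_t\lambda_t y_{i_t})$ into a $B$-part (the functionals $g_t$ that act on single $y_{i_t}$'s) and an $A_2$-part (the $f_q$'s that span several $y_{i_t}$'s). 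The $A_2$-part is bounded by the inductive hypothesis applied to each $f_q$ together with the at-most-two overlap of the $E_q$'s (Lemma~\ref{some details}) and $\theta\leqslant 1/4$. For the $B$-part two cases arise. \emph{Case A} ($\eta\geqslant\zeta$, so $p_\eta\geqslant p_\zeta$): the monotonicity of $\ell_p$-norms and the superadditivity of $x\mapsto x^{p_\eta/p_\zeta}$ yield the bound directly from $|g_t(y_{i_t})|\leqslant 1$. \emph{Case B} ($\eta=k<\zeta$): the hypothesis $\al_{<k}\{(x_j)_j\}=0$ is invoked. Expanding each $g_t$ into its order-${<}k$ components $f_q$, the ``very fast growing'' condition $s(f_q)>\max\supp f_{q-1}$ combined with the subsequence choice forces $|f_q(y_{i_{t(q)}})|<\delta_{i_{t(q)}}$ for every contributing $q\geqslant 2$, while the unique first contributing $f_q$ is bounded crudely by $\theta|c_1\lambda_{t(1)}|\leqslant\|\lambda\|_{\ell_{p_\zeta}}$. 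The cumulative $A_1$-error is then controlled by $\sum_i\delta_i\cdot\|\lambda\|_\infty$, giving the total bound $3\|\lambda\|_{\ell_{p_\zeta}}$.

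The main obstacle will be Case B. First, matching the size $s(f_q)$ to the subsequence index requires careful bookkeeping of which $f_q$ hit which $y$'s and how the very-fast-growing chain propagates the support threshold through the decomposition of $f$, particularly when certain $f_q$ in the decomposition fail to contribute to the action on the block and one must instead chain through their $\max\supp$. Second, the $A_2$-part in Case B cannot be handled by naive application of the inductive hypothesis alone, since combining $\ell_{p_\zeta}$-inner estimates through an $\ell_{p_\eta}$-outer sum with $p_\eta<p_\zeta$ degrades the bound; one likely needs to strengthen the inductive claim so that it carries both an $\ell_{p_\zeta}$-principal term and a separate small $\ell_\infty$-error controlled by the $\delta_i$'s, absorbing the low-$p_\eta$ combinations into the $\al_{<k}=0$ contribution rather than into the principal term.
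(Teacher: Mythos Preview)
Your Case A (order $\eta\geqslant\zeta$) is fine and matches the paper's Case 1. The gap is entirely in Case B, and you have correctly put your finger on it: once $p_\eta<p_\zeta$, feeding the inductive $\ell_{p_\zeta}$-bound on each $f_q(\sum_{t\in E_q}\lambda_t y_{i_t})$ into an outer $\ell_{p_\eta}$-norm blows up, and your proposed remedy of strengthening the inductive hypothesis with an auxiliary $\ell_\infty$-error term is not carried out and would not obviously close. The underlying reason the $B/A_2$ split from Lemma~\ref{action analysis} is the wrong tool here is that it forces you to recombine \emph{many} inductive estimates through $\ell_{p_\eta}$.

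The paper avoids this entirely by \emph{not} using Lemma~\ref{action analysis} in Case B. Instead it exploits admissibility, which you never invoke. If $f=\theta\sum_{q=1}^d c_q f_q$ has order $k''<\zeta$ and $t_0$ is the first index with $\ran f\cap\ran x_{j_{t_0}}\neq\varnothing$, then admissibility gives $d\leqslant\min\supp f_1\leqslant\max\supp x_{j_{t_0}}$. Setting $q_0=\min\{q:\max\supp f_q\geqslant\min\supp x_{j_{t_0+1}}\}$, one splits $f$ into three pieces: the part $q<q_0$ acts only on $x_{j_{t_0}}$ and is bounded by $|\lambda_{t_0}|$; the single functional $f_{q_0}$ absorbs the inductive hypothesis (so no outer $\ell_{p_\eta}$-combination is needed); and for $q>q_0$ the very-fast-growing condition gives $s(f_q)\geqslant\min\supp x_{j_{t_0+1}}$, so the $\al_{<k''}=0$ estimate applies. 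The crucial point is that the contribution of the tail $q>q_0$ is a \emph{double sum} over at most $d\leqslant\max\supp x_{j_{t_0}}$ values of $q$ and at most $m\leqslant j_{t_0+1}$ values of $t$, and this is why the subsequence must be chosen with the specific decay $|f(x_j)|<\bigl(j_0\max\supp x_{j_0-1}\bigr)^{-1}$ rather than a generic summable $\delta_i$: the two factors are designed to cancel exactly those two counts. Your summable-$\delta_i$ extraction is too weak for this.

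Finally, the case $\zeta=\xi_0=\omega$ does not require the diagonal extraction you sketch: once the finite-$\zeta$ case is done, pass to a subsequence with a spreading model; it is $2$-dominated by $\ell_{p_k}$ for every $k<\omega$, hence by $\ell_{p_\omega}$ in the limit.
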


\begin{proof}
We first consider the case in which $\zeta$ is finite, i.e. $\zeta = k^\prime + 1$ with $1\leqslant k^\prime <\xi_0$.

%Set $C_k = 2^{2 - 1/p_k}/(2^{2 - 1/p_k} - 1)$ and observe that $4/3 \leqslant C_{k} \leqslant 2$ and $1 + \theta C_{k} \leqslant C_{k}$.

Using Proposition \ref{index char} we pass to a subsequence, again denoted by $(x_j)_j$ such that for any
$j \geqslant j_0\geqslant 2$ and any
$f\in W$ of order strictly smaller than $k^\prime$ with
$s(f) \geqslant \min\supp x_{j_0}$ we
have that
\begin{equation}
|f(x_j)| < \left(j_0\max\supp x_{j_0-1}\right)^{-1} .\label{constraint action}
\end{equation}
We will show by induction on $n$, where $W = \cup_nW_n$ (see Remark \ref{useful for inductive proofs on the norming set and other such stuff}) that for every $m\leqslant j_1 < \cdots < j_m$, every real numbers $\lambda_1,\ldots,\lambda_m$ and every $f\in W_n$ the following holds:
%\begin{equation}\label{E3}
%\left|f\bigg(\sum_{t=1}^m\lambda_t x_{j_{t}}\bigg)\right| \leqslant 2\left(\sum_{t=1}^m|\lamb%da_t|^{p_{k}}\right)^{1/p_{k}}.
%\end{equation}

\begin{equation}\label{E3}
\left|f\bigg(\sum_{t=1}^m\lambda_t x_{j_{t}}\bigg)\right| \leqslant 2\left\|\left(\lambda_t\right)_{t=1}^m\right\|_{\ell_{p_{\zeta}}}.
\end{equation}

For $f\in W_0$ the result holds. Let $f = \theta\sum_{q=1}^dc_qf_q$ be a functional in $W_n$. We distinguish two cases, concerning  the order of $f$.

\noindent {\em Case 1:} The functional $f$ is of order-$\eta$ with $\zeta \leqslant \eta \leqslant \xi_0$.  By Inequality \eqref{E2} after Lemma \ref{action analysis}, we have that

%By Lemma \ref{action analysis}, following the notation used there, applying H\"older's inequality for the pair $(p_\eta,p'_\eta)$ and applying the inductive hypothesis, we obtain the following:

\begin{eqnarray}
\left| f\left(\sum_{t=1}^m  \lambda_tx_{j_{t}}\right)\right| \leqslant \left(\sum_{t\in B}|\lambda_t|^{p_\eta}\right)^{1/p_\eta} + \theta\left(\sum_{q\in A_2}\left|f_q\left(\sum_{t\in E_q}\lambda_t x_{j_t}\right)\right|^{p_\eta}\right)^{1/p_\eta}\nonumber\\
\leqslant \left(\sum_{t\in B}|\lambda_t|^{p_\eta}\right)^{1/p_\eta}+ \theta 2\left(\sum_{q\in A_2}\left(\sum_{t\in E_q}|\la_t|^{p_{k}}\right)^{p_\eta/p_{k}}\right)^{1/p_\eta}\label{upper estimate 1} \textrm{by }\eqref{E3}.
\end{eqnarray}

The fact that $p_{\zeta} \leqslant p_\eta$ and  for each $1\leqslant t\leqslant m$ there exists at most two distinct $q\in A_2$ such that $t\in E_q$ implies that
%the last part of Lemma \ref{some details} imply that
\begin{equation}
\left(\sum_{q\in A_2}\left(\sum_{t\in E_q}|\la_t|^{p_{\zeta}}\right)^{p_\eta/p_{\zeta}}\right)^{1/p_\eta} \leqslant 2^{1/p_{\zeta}}\left(\sum_{t=1}^m|\lambda_t|^{p_{\zeta}}\right)^{1/p_{\zeta}}.\label{upper estimate 2}
\end{equation}
Combining relations \eqref{upper estimate 1} and \eqref{upper estimate 2} with $0<\theta\leqslant 1/4$, we get that $|f(\sum_{t=1}^m\lambda_t x_{j_{t}})|$ is bounded by the desired value. Note that for convenience we have implicitly assumed that $p_\eta<\infty$, but the case that $p_\eta=\infty$ would only require trivial modification.

\noindent {\em Case 2:} The functional $f$ is of order-$k^{\prime\prime}$ with $1\leqslant k^{\prime\prime} \leqslant k^\prime$.

Set 
\begin{eqnarray*}
t_0 &=& \min\{t: \ran f\cap \ran x_{j_t}\neq\varnothing\}\;\text{and}\\
q_0 &=& \min\{q: \max \supp f_q \geqslant \min\supp x_{j_{t_0 + 1}}\}.
\end{eqnarray*}
We shall prove the following:
\begin{equation}
\theta \left|\sum_{q>q_0}c_qf_q\left(\sum_{t=1}^m\la_tx_{j_t}\right)\right| < \theta \max\{|\lambda_t|:\;t>t_0\}.\label{action on tail}
\end{equation}
Since $(f_q)_{q=1}^d$ is admissible,
we have that $d \leqslant\max\supp x_{j_{t_0}}$. Also, $(f_q)_{q=1}^d$ is very fast growing and hence for $q >
q_0$ we have that
$$s(f_q) \geqslant \max \supp f_{q_0} \geqslant \min \supp x_{j_{t_0 +
1}}.$$
Moreover the functionals $f_q$ are of order strictly smaller than $k^\prime$,
therefore for $q>q_0$  and $t>t_0$, \eqref{constraint action} yields that
$
|f_q(x_{j_t})| < 1/\left(j_{t_0+1}\max\supp x_{j_{t_0}}\right)
$
and since $d\leqslant\max\supp x_{j_{t_0}}$, by keeping $t$ fixed, we obtain $\sum_{q>q_0}|c_qf_q(x_{j_t})| < 1/j_{t_0+1}$. Similarly, summing over the $t$ which are strictly greater than $t_0$, since $m \leqslant j_{t_0+1}$ we obtain:
\begin{eqnarray*}
\left|\sum_{q>q_0}c_qf_q\left(\sum_{t=1}^m\la_tx_{j_t}\right)\right| &=& \left|\sum_{q>q_0}c_qf_q\left(\sum_{t>t_0}^m\la_tx_{j_t}\right)\right|\\
&\leqslant& \sum_{t> t_0}|\la_t|\sum_{q>q_0}\left|c_qf_q\left(x_{j_t}\right)\right| \\
&<& \max_{t>t_0}|\lambda_t| (m/j_{t_0+1}) \leqslant \max_{t>t_0}|\lambda_t|.
\end{eqnarray*}
Thus, \eqref{action on tail} holds. We now observe the following:
\begin{equation}
\theta \left|\sum_{q<q_0}c_qf_q\left(\sum_{t=1}^m\la_tx_{j_t}\right)\right| = \theta \left|\sum_{q<q_0}c_qf_q\left(\la_{t_0}x_{j_{t_0}}\right)\right| \leqslant |\la_{t_0}|.\label{action on first}
\end{equation}
Moreover, the inductive assumption yields that
\begin{equation}
\left|f_{q_0}\left(\sum_{t=1}^m\la_tx_{j_t}\right)\right| \leqslant 2\left\|\left(\lambda_t\right)_{t=1}^m\right\|_{\ell_{p_{\zeta}}}.\label{action on middle}
\end{equation}
Combining \eqref{action on tail}, \eqref{action on first} and \eqref{action on middle} with $0<\theta\leqslant 1/4$ we get that $|f(\sum_{t=1}^m\lambda_t x_{j_{t}})|$ is bounded by the desired value.
 
The proof for the case in which $\zeta$ is finite is complete. Assume now that $\zeta = \xi_0 = \omega$ and pass to a 
subsequence of $(x_j)_j$ generating as a spreading model some sequence 
$(z_j)_j$. The previous case implies that $(z_j)_j$ is 2-dominated by  the unit vector basis of $\ell_{p_k}$ for all $k < \xi_0$ 
and hence, by taking a limit, it is also 2-dominated by the unit vector basis of  $\ell_{p_{\xi_0}}$ which yields the desired result.
\end{proof}

The next result explains that the $\al_{<k}$-indices of a given block sequence determine the spreading models admitted by it.

\begin{prop}\label{sm versus index}
Let $(x_j)_j$ be a normalized block sequence in $X$. Then $(x_j)_j$ admits an $\ell_{p_\zeta}$ spreading model, for some $\zeta\in[1,\xi_0]$. The following describes more precisely the spreading models admitted by $(x_j)_j$.

\begin{itemize}

\item [a.]Let $2\leqslant k < \xi_0$, then the following assertions are equivalent:
\begin{enumerate}

\item[(i)]   $\al_{<k}\{(x_j)_j\} > 0$ and $\al_{<k^\prime}\{(x_j)_j\} = 0$ for $1\leqslant k^\prime < k$.

\item[(ii)] There exists a subsequence of $(x_j)_j$ that generates an $\ell_{p_{k}}$ spreading model, while no subsequence of $(x_j)_j$ generates an $\ell_{p_{k^\prime}}$ spreading model for $1\leqslant k^\prime < k$.

\end{enumerate}
\item[b.] The following are equivalent
\begin{enumerate}

\item[(i)] $\al_{<1}\{(x_j)_j\} > 0$.

\item[(ii)] There exists a subsequence of $(x_j)_j$ that generates an $\ell_{p_{1}}$ spreading model.

\end{enumerate}

\item[c.] The following are also equivalent:
\begin{enumerate}

\item[(i)] For every $1\leqslant k < \xi_0$ we have that $\al_{<k}\{(x_j)_j\} = 0$.

\item[(ii)] Every subsequence of $(x_j)_j$ has a further subsequence generating an $\ell_{p_{\xi_0}}$ spreading model.

\end{enumerate}
\end{itemize}
Note that in the case $\xi_0$ is finite and $\xi_0 = k_0 + 1$, then c.(i) is equivalent to $\al_{<k_0}\{(x_j)_j\} = 0$.
\end{prop}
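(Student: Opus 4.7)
The plan is to organize the argument around the partition induced by the $\al$-indices. By the monotonicity remark preceding Proposition \ref{index char}, exactly one of the following three conditions holds for $(x_j)_j$, and these are precisely the conditions labeled (i) in cases a, b, c of the proposition: (c$'$) $\al_{<k}\{(x_j)_j\} = 0$ for every $1\leqslant k < \xi_0$; (b$'$) $\al_{<1}\{(x_j)_j\} > 0$; or (a$'$) there is a unique $2\leqslant k < \xi_0$ with $\al_{<k}\{(x_j)_j\} > 0$ and $\al_{<k'}\{(x_j)_j\} = 0$ for $1\leqslant k' < k$. Two auxiliary facts will be used throughout: vanishing of an $\al$-index is inherited by subsequences (immediate from the definition), and if a sequence generates a spreading model then every subsequence generates the same one (immediate from the iterated-limit definition of spreading models). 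Also, every bounded sequence has a subsequence generating some spreading model, by a standard compactness argument.

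For (i) $\Rightarrow$ (ii) in each case I would combine the two estimate lemmas. In case c, Lemma \ref{upper estimate} applied to any subsequence with $\zeta=\xi_0$ produces a further subsequence with spreading model $3$-dominated by $\ell_{p_{\xi_0}}$, while Proposition \ref{general estimate} supplies the matching lower bound, yielding (c)(ii). In case b, Lemma \ref{lower estimate} with $k=1$ produces a subsequence whose spreading model (on a further SM-generating subsequence) dominates $\ell_{p_1}$, and Proposition \ref{general estimate} supplies the matching upper $\ell_{p_1}$ bound. In case a, I would apply Lemma \ref{lower estimate} for the lower $\ell_{p_k}$ estimate, then Lemma \ref{upper estimate} to the resulting subsequence with $\zeta = k \geqslant 2$ (permissible because $\al_{<k'} = 0$ for $k' < k$ is inherited by the subsequence), obtaining an $\ell_{p_k}$ spreading model. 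The ``no $\ell_{p_{k'}}$ for $k' < k$'' clause then follows because Lemma \ref{upper estimate} applies equally to \emph{any} subsequence of $(x_j)_j$ with $\zeta=k$, so every spreading model arising from a further subsequence is $3$-dominated by $\ell_{p_k}$, and an equal-coefficient test on $m$ terms shows that the basis of $\ell_{p_{k'}}$ with $p_{k'}<p_k$ is not. The existence assertion of the first sentence of the proposition drops out as a byproduct.

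For (ii) $\Rightarrow$ (i) in each case I would argue by elimination using the exhaustiveness of (a$'$)/(b$'$)/(c$'$). If (a)(ii) holds with some $k$, one of the three conditions in (i) must hold for $(x_j)_j$ and by the forward direction just proved the corresponding (ii) also holds. Option (c$'$) is ruled out because (c)(ii) would force a further subsequence of the $\ell_{p_k}$-spreading-model subsequence provided by (a)(ii) to have $\ell_{p_{\xi_0}}$ spreading model, contradicting the preservation of spreading models under subsequences. Option (b$'$) is ruled out because (b)(ii) would produce a subsequence with $\ell_{p_1}$ spreading model, contradicting the ``no $\ell_{p_{k'}}$ for $k' < k$'' clause. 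Option (a$'$) with $k'' \neq k$ produces the same kind of clash. Hence (a)(i) holds with the same $k$; cases b and c are handled identically. The final parenthetical about finite $\xi_0$ is an immediate consequence of the monotonicity remark, which collapses the infinite conjunction in (c)(i) to the single condition $\al_{<k_0}\{(x_j)_j\} = 0$ when $\xi_0 = k_0 + 1$.

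The main technical subtlety, though not a serious one, is making sure the two estimate lemmas combine on a single subsequence in case a and that the ``no $\ell_{p_{k'}}$'' obstruction works on arbitrary subsequences; both reduce to the fact that the hypothesis $\al_{<k'} = 0$ is preserved under passing to subsequences. The rest of the argument is bookkeeping with the partition.
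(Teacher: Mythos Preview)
Your proposal is correct and follows essentially the same approach as the paper: the forward directions combine Lemma~\ref{lower estimate}, Lemma~\ref{upper estimate}, and Proposition~\ref{general estimate} exactly as you describe, and your elimination argument for the backward directions is a repackaging of the paper's device of setting $k_0 = \min\{k:\al_{<k}\{(x_j)_j\}>0\}$ and showing $k_0=k$. The only cosmetic difference is that the paper argues the reverse implication in case~a.\ directly from the lemmas rather than via the already-established forward implications of the other cases, but the content is the same.
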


\begin{proof}
We shall only prove a. as the others are proved similarly, using Proposition \ref{general estimate} and Lemmas \ref{lower estimate}, \ref{upper estimate}. Assume that the first assertion of a. holds. Note that on every subsequence of $(x_j)_j$ the $\al_{<k-1}$-index is zero, and hence, applying Lemma \ref{upper estimate}, it has a further subsequence which admits a spreading model dominated by the unit vector basis of $\ell_{p_{k}}$. This in particular implies that no subsequence of $(x_i)_i$ generates an $\ell_{p_{k^\prime}}$ spreading model for $1\leqslant k^\prime < k$. Moreover, applying Lemma \ref{lower estimate} we pass to a subsequence $(x_{j_i})_i$, of $(x_j)_j$, generating some spreading model  dominating the usual vector basis of $\ell_{p_{k}}$. Since $\al_{<k-1}\{(x_j)_j\} = 0$, Lemma \ref{upper estimate} implies that this spreading model has to be  $\ell_{p_{k}}$. 

We assume now that the second assertion of a. holds. We first note that $\al_{<k}\{(x_j)_j\} > 0$. If this were not the case, then on every subsequence of $(x_j)_j$ the $\al_{<k}$-index would be zero and hence, by Lemma \ref{upper estimate}, every spreading model admitted by it is dominated by the unit vector basis of $\ell_{p_{k+1}}$. This means that no subsequence of $(x_j)_j$ can generate an $\ell_{p_{k}}$ spreading model, which is absurd.  Therefore the natural number $k_0 = \min\{k\in[1,\xi_0): \;\al_{<k}\{(x_j)_j\}>0\}$ is well defined and $k_0\leqslant k$. We shall prove that $k_0 = k$ and this will complete the proof.

Assume that $k_0 < k$ and apply Lemma \ref{lower estimate} to pass to a subsequence $(x_{j_i})_i$ of $(x_j)_j$ generating some spreading model which dominates the usual basis of $\ell_{p_{k_0}}$. If $k_0=1$ then by Proposition \ref{general estimate} we conclude that  $(x_{j_i})_i$ generates an $\ell_{p_1}$ spreading model, where $1 = k_0 < k$, which is absurd. If $1<k_0$, then $\al_{<k_0 - 1}\{(x_{j_i})_i\} = 0$ by and Lemma \ref{upper estimate} we conclude that $(x_{j_i})_i$ generates an $\ell_{p_{k_0}}$ spreading model, which is absurd for the same reasons.

\end{proof}

\begin{rem}
It is not hard to check that for every $1\leqslant k < \xi_0$, the $\al_{<k}$-index of the basis $(e_i)_i$ is zero and hence it only admits $\ell_{p_{\xi_0}}$ as a spreading model.
\end{rem}

\section{Spreading models of infinite dimensional subspaces of $X$.}
In the previous section we showed that every spreading model admitted by $X$ must be $\ell_p$ for some $p\in F$.
In this section we  show that, starting with a block sequence generating some spreading model, one may pass to a block sequence of it generating an other spreading model. We conclude that, in the case in which $F$ is finite, the spreading models admitted by every infinite dimensional subspace of $X$ are exactly the $\ell_{p}$ for $p\in F$. In the case which $F$ consists of an increasing sequence and its limit $p_{\xi_0}$, the spreading models admitted by every infinite dimensional subspace of $X$ may either be the $\ell_{p}$ for $p\in F$, or the $\ell_{p}$ for $p\in F\setminus\{p_{\xi_0}\}$. We start with two lemmas that describe the kind of block vectors one has to consider when switching from one spreading model to an other.

\begin{lem}\label{switch sm1}
Let $k$ be in $[1,\xi_0)$, $x_1 < \cdots < x_K$ be a finite normalized block sequence in $X$ that is  3-dominated by  the unit vector basis of $\ell_{p_k}^K$,  and set $x = K^{-1/p_k}\sum_{j=1}^K x_j$. If $f$ is a functional of order-$0$ in $W$ with $s(f) = m$ then the following holds:
\begin{equation}
|f(x)| < \frac{K^{1/p_{\xi_0}}}{K^{1/p_k}} + 2\frac{m^{1/p_{\xi_0}}}{m^{1/p_k}},
\end{equation}
where in the case $p_{\xi_0} = \infty$ we set $1/p_{\xi_0} = 0$.
\end{lem}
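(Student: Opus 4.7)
The plan is to unpack the special form forced on $f$ by the size condition. By the definition of size for order-$0$ functionals, $f$ must be of the form $f = \theta \sum_{q=1}^d m^{-1/p_{\xi_0}'} f_q$ for some successive $f_1 < \cdots < f_d$ in $W$ with $d \leqslant m$. Applying Lemma \ref{action analysis} with $\zeta = \xi_0$ together with inequality \eqref{E1} gives
\[
|f(x)| \leqslant K^{-1/p_k}\bigg( \sum_{j \in B} C_j |g_j(x_j)| + \theta \sum_{q \in A_2} m^{-1/p_{\xi_0}'}\Big|f_q\Big(\sum_{j \in E_q} x_j\Big)\Big|\bigg),
\]
and I would bound the two summands separately.

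For the $A_1$ summand, each $g_j$ lies in $W$ and $\|x_j\| = 1$, so $|g_j(x_j)| \leqslant 1$. H\"older's inequality with conjugate exponents $(p_{\xi_0}, p_{\xi_0}')$, combined with $|B| \leqslant K$ and the bound $\sum_{j \in B} C_j^{p_{\xi_0}'} \leqslant 1$ from Lemma \ref{action analysis}, yields $\sum_{j \in B} C_j \leqslant K^{1/p_{\xi_0}}$. Dividing by $K^{1/p_k}$ produces precisely the first term $K^{1/p_{\xi_0}}/K^{1/p_k}$.

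For the $A_2$ summand I would invoke the $3$-domination hypothesis on $(x_j)_{j=1}^K$ to estimate $|f_q(\sum_{j\in E_q} x_j)| \leqslant \|\sum_{j\in E_q} x_j\| \leqslant 3|E_q|^{1/p_k}$. By the final assertion of Lemma \ref{some details} each index $j$ lies in at most two of the $E_q$'s, so $\sum_{q \in A_2}|E_q| \leqslant 2K$, while $|A_2| \leqslant d \leqslant m$. H\"older with exponents $(p_k, p_k')$ then gives
\[
\sum_{q \in A_2} |E_q|^{1/p_k} \leqslant |A_2|^{1/p_k'}\Big(\sum_q |E_q|\Big)^{1/p_k} \leqslant m^{1/p_k'}(2K)^{1/p_k}.
\]
After dividing by $K^{1/p_k}$ the $A_2$ piece is bounded by $3\theta\cdot 2^{1/p_k}\cdot m^{1/p_k' - 1/p_{\xi_0}'} = 3\theta\cdot 2^{1/p_k}\cdot m^{1/p_{\xi_0} - 1/p_k}$, where I used $1/p' = 1 - 1/p$. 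Since $\theta \leqslant 1/4$ and $2^{1/p_k} \leqslant 2$, the constant $3\theta \cdot 2^{1/p_k} \leqslant 3/2 < 2$, delivering the desired strict inequality.

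There is no real obstacle beyond careful H\"older bookkeeping with the two distinct conjugate pairs: the $(p_{\xi_0}, p_{\xi_0}')$ pair governs how many functionals $f_q$ collapse onto a single $x_j$ (yielding the $K^{1/p_{\xi_0}}$ factor from $|B|\leqslant K$), while the $(p_k, p_k')$ pair controls how one $f_q$ spreads across multiple $x_j$'s (yielding the $m^{1/p_{\xi_0}}$ factor after the $m^{1/p_k'}$ from $|A_2|$ cancels against the $m^{-1/p_{\xi_0}'}$ coefficient). The $p_{\xi_0}$-exponents come from $f$ being order-$0$ while the $p_k$-exponents come from the domination hypothesis on the averaged block; it is precisely this mismatch that produces the useful small gain when $p_k < p_{\xi_0}$.
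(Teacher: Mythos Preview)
Your proof is correct and follows essentially the same route as the paper: decompose $f$ via its size-$m$ form, split with Lemma~\ref{action analysis} into the $A_1$ and $A_2$ pieces, bound the $A_1$ sum by $K^{1/p_{\xi_0}}$ via H\"older with $(p_{\xi_0},p_{\xi_0}')$, and bound the $A_2$ sum using the $3$-domination together with H\"older for $(p_k,p_k')$, $|A_2|\leqslant m$, and $\sum_q |E_q|\leqslant 2K$. The only cosmetic difference is that the paper passes through the $\ell_{p_{\xi_0}}$-norm form of \eqref{E2} for the $B$-term rather than keeping the $C_j$'s explicit as you do; the resulting constants and the final strict inequality $3\theta\cdot 2^{1/p_k}<2$ are identical.
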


\begin{proof}
Let $f = \theta\sum_{q=1}^d(1/m)^{1/p_{\xi_0}^\prime}f_q$ be a functional of order-$0$ with $s(f) = m$ (recall that $d\leqslant m$). For convenience, we assume that $p_{\xi_0}<\infty$, and the proof for the case $\xi_0=\infty$ requires only trivial modifications.  By Lemma \ref{action analysis}, following the notation used there, applying H\"older's inequality for the pair $(p_{\xi_0},p_{\xi'_0})$, we obtain the following:
\begin{eqnarray}
\left|f(x)\right| &\leqslant& K^{-1/p_k}\left(\left(\sum_{j\in B}|g_t(x_{j})|^{p_{\xi_0}}\right)^{1/p_{\xi_0}} \right.\nonumber\\
 & &\left.+\theta\left(\sum_{q\in A_2}(1/m)^{1/p^\prime_{\xi_0}}\left|f_q\left(\sum_{j\in E_q}x_{j}\right)\right|\right)\right)\nonumber\\
&\leqslant& K^{-1/p_k}\left(K^{1/p_{\xi_0}} + 3\theta(1/m)^{1/p^\prime_{\xi_0}}\left(\sum_{q\in A_2}(\#E_q)^{1/p_k}\right)\right).\label{getting bored}
\end{eqnarray}
Recall that $\#A_2 \leqslant d \leqslant m$ and  
the last part of Lemma \ref{some details} gives that $\sum_{q\in A_2} \#E_q< 2K$. Combing these two facts gives us
\begin{eqnarray}
(1/m)^{1/p^\prime_{\xi_0}}\left(\sum_{q\in A_2}(\#E_q)^{1/p_k}\right) &\leqslant& (1/m)^{1/p^\prime_{\xi_0}}
m^{1/p^\prime_{k}}\left(\sum_{q\in A_2} \#E_q\right) ^{1/p_k}\nonumber\\
&<& \frac{m^{1/p^\prime_k}}{m^{1/p^\prime_{\xi_0}}}2^{1/p_k}K^{1/p_k}\nonumber\\
 &=& 2^{1/p_k}\frac{m^{1/p_{\xi_0}}}{m^{1/p_k}} K^{1/p_k}.\label{as if anybody would read this}
\end{eqnarray}

%Combining this with the last part of Lemma \ref{some details}  we conclude:
%\begin{eqnarray}
%(1/m)^{1/p^\prime_{\xi_0}}\left(\sum_{q\in A_2}(\#E_q)^{1/p_k}\right) &<& (1/m)^{1/p^\prime_{\xi_0}}m%^{1/p^\prime_{k}}2^{1/p_k}K^{1/p_k}\nonumber\\
% &=& 2^{1/p_k}\frac{m^{1/p_{\xi_0}}}{m^{1/p_k}} K^{1/p_k}.\label{as if anybody would read this}
%\end{eqnarray}
By combining relations \eqref{getting bored} and \eqref{as if anybody would read this} we achieve the desired upper bound.
\end{proof}

\begin{lem}\label{switch sm2}
Let $(x_j)_j$ be a normalized block sequence in $X$ and $2\leqslant k + 1 < \xi_0$ with $\al_{<k}\{(x_j)_j\} = 0$. Then there exists a subsequence $(x_{j_i})_i$ of $(x_j)_j$ such that for every $K\leqslant j_{i_1} < \cdots < j_{i_K}$  and every $f\in W$ of order at most $k$ with $s(f) = m$, we have that if $x = K^{-1/p_{k+1}}\sum_{t=1}^Kx_{j_{i_t}}$ then 
\begin{equation}
|f(x)| <  \frac{3 + K^{1/p_{\xi_0}}}{K^{1/p_{k+1}}} + 2\frac{m^{1/p_{\xi_0}}}{m^{1/p_{k+1}}},
\end{equation}
where in the case $p_{\xi_0} = \infty$ we set $1/p_{\xi_0} = 0$.
\end{lem}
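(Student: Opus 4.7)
The plan is to prove the estimate by induction on the order $\zeta$ of $f$. The base case $\zeta=0$ follows immediately from Lemma \ref{switch sm1} applied with $k$ replaced by $k+1$, which gives an even sharper bound missing the $3/K^{1/p_{k+1}}$ summand. For the inductive step, the hypothesis $\al_{<k}\{(x_j)_j\}=0$ is the crucial ingredient: via Proposition \ref{index char} it controls the action on late members of our subsequence of every constituent $f_q$ of $f$, since all such constituents have order strictly less than $k$.

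First I would pass to a subsequence $(x_{j_i})_i$ satisfying: (a) by Lemma \ref{upper estimate}, every finite block drawn from $(x_{j_i})_i$ is $3$-dominated by the unit vector basis of $\ell_{p_{k+1}}$ of the same length; and (b) mirroring the subsequence chosen in the proof of Lemma \ref{upper estimate}, for every $i_0\geq 2$, every $i\geq i_0$, and every $f'\in W$ of order strictly less than $k$ with $s(f')\geq \min\supp x_{j_{i_0}}$, one has $|f'(x_{j_i})| < (j_{i_0}\max\supp x_{j_{i_0-1}})^{-1}$.

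For the inductive step, given $K\leq j_{i_1}<\cdots<j_{i_K}$, $f=\theta\sum_{q=1}^d c_q f_q$ of order $\zeta\in\{1,\ldots,k\}$ with $s(f)=m$, and $x = K^{-1/p_{k+1}}\sum_t x_{j_{i_t}}$, I would introduce the boundary indices $t_0=\min\{t:\ran f\cap \ran x_{j_{i_t}}\neq\varnothing\}$ and $q_0=\min\{q:\max\supp f_q\geq \min\supp x_{j_{i_{t_0+1}}}\}$ exactly as in Lemma \ref{upper estimate}, then write $|f(\sum_t x_{j_{i_t}})|$ as the sum of the three contributions from $q<q_0$, $q=q_0$, and $q>q_0$. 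The first contribution only hits $x_{j_{i_{t_0}}}$ and is bounded by $1$ because the appropriately renormalized prefix is itself an order-$\zeta$ functional in $W$. By the very fast growing condition together with the definition of $q_0$, every $q>q_0$ satisfies $s(f_q)\geq \min\supp x_{j_{i_{t_0+1}}}$, so (b) applies; combined with the admissibility bound $d\leq \max\supp x_{j_{i_{t_0}}}$, the same telescoping argument as in the proof of Lemma \ref{upper estimate} bounds the third contribution by $\theta K/j_{i_{t_0+1}}\leq \theta$ (using $K\leq j_{i_1}\leq j_{i_{t_0+1}}$).

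The decisive step is the middle term $\theta c_{q_0} f_{q_0}(\sum_t x_{j_{i_t}})$. Since $f_{q_0}$ has order strictly less than $\zeta$, the inductive hypothesis applies to $f_{q_0}$ with the same subsequence, the same choice of indices, and size $s(f_{q_0})\geq m$, yielding $|f_{q_0}(x)| < (3+K^{1/p_{\xi_0}})/K^{1/p_{k+1}} + 2 s(f_{q_0})^{1/p_{\xi_0}}/s(f_{q_0})^{1/p_{k+1}}$; multiplying by $\theta|c_{q_0}|K^{1/p_{k+1}}\leq \theta K^{1/p_{k+1}}$ and monotonicity of $s\mapsto s^{1/p_{\xi_0}-1/p_{k+1}}$ produces contributions $\theta(3+K^{1/p_{\xi_0}})$ and $2\theta K^{1/p_{k+1}} m^{1/p_{\xi_0}}/m^{1/p_{k+1}}$. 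Summing the three pieces and dividing by $K^{1/p_{k+1}}$, the factor $\theta\leq 1/4$ leaves ample room to absorb all errors and yield the stated bound. The principal obstacle lies in choosing the subsequence $(x_{j_i})_i$ once-and-for-all so that condition (b) is strong enough to make the tail estimate for $q>q_0$ telescope cleanly via the admissibility constraint $d\leq\max\supp x_{j_{i_{t_0}}}$, while simultaneously remaining uniform enough that the inductive hypothesis on $f_{q_0}$ is available without altering the subsequence; this coordination is exactly what the $\al_{<k}=0$ hypothesis affords via Proposition \ref{index char}.
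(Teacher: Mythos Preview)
Your proposal is correct and follows essentially the same route as the paper: pass to a subsequence using Lemma~\ref{upper estimate} (for the $3$-domination by $\ell_{p_{k+1}}$) and Proposition~\ref{index char} (for the smallness condition on functionals of order $<k$ with large size), then argue by induction on the order $\zeta\in\{0,\ldots,k\}$ of $f$ using the $t_0,q_0$ splitting of Lemma~\ref{upper estimate}, with the base case $\zeta=0$ supplied by Lemma~\ref{switch sm1}. The only cosmetic difference is bookkeeping: the paper proves a slightly sharper auxiliary inequality carrying a geometric factor $\frac{1-\theta^i}{1-\theta}$ on the $2/K^{1/p_{k+1}}$ term and then bounds $\frac{1}{1-\theta}\cdot 2<3$, whereas you induct directly on the target inequality and verify that the $\theta\leqslant 1/4$ slack absorbs the errors; the two computations are equivalent.
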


\begin{proof}
By Lemma \ref{upper estimate} we may assume %that $(x_j)_j$ 
for every $K\leqslant j_1 < \cdots < j_K$ that $(x_{j_i})_{i=1}^K$ is  3-dominated by  the unit vector basis of $\ell_{p_{k+1}}^K$.
Using Proposition \ref{index char} we pass to a subsequence, again denoted by $(x_j)_j$ such that for any
$j \geqslant j_0\geqslant 2$, for any
$f\in W$ of order strictly smaller than $k$ with
$s(f) \geqslant \min\supp x_{j_0}$ we
have that
\begin{equation}
|f(x_j)| < \left(j_0\max\supp x_{j_0-1}\right)^{-1} .\label{constraint action blah blah}
\end{equation}
Let $K\leqslant j_1 < \cdots < j_K$, $x = K^{-1/p_{k+1}}\sum_{i=1}^Kx_{j_i}$ and $f = \theta\sum_{q=1}^dc_qf_q$ be a functional of order at most $k$.
Using a finite induction on $0\leqslant k^\prime\leqslant k$, we shall prove that for every $f = \theta\sum_{q=1}^dc_qf_q$ of order at most $k^\prime$ there is $i\inn$ such that the following holds:
\begin{equation}
|f(x)| < \left(\frac{1-\theta^i}{1-\theta}\right)\frac{2}{K^{1/p_{k+1}}}  + \frac{K^{1/p_0}}{K^{1/p_{k+1}}} + 2\frac{m^{1/p_{\xi_0}}}{m^{1/p_{k+1}}}.\label{I just had an omelette}
\end{equation}
The above in conjunction with  $0<\theta\leqslant 1/4$ clearly implies the desired result.

If a functional $f$ is of order-$0$. Then, by Lemma \ref{switch sm1} for $i=1$ we have that \eqref{I just had an omelette} holds. Assume that $f = \theta\sum_{q=1}^dc_qf_q$ is of order-$k^\prime$ with $0 < k^\prime\leqslant k$ and that \eqref{I just had an omelette} holds for every functional with order strictly smaller than $k^\prime$. Set 
\begin{eqnarray*}
t_0 &=& \min\{t: \ran f\cap \ran x_{j_t}\neq\varnothing\}\;\text{and}\\
q_0 &=& \min\{q: \max \supp f_q \geqslant \min\supp x_{j_{t_0 + 1}}\}.
\end{eqnarray*}
The same argument used to obtain \eqref{action on tail} and \eqref{action on first} in the proof of Lemma \ref{upper estimate} gives us the following:
\begin{equation}
\left|\theta\sum_{q\neq q_0}c_qf_q(x)\right| < 2/K^{1/p_{k+1}}.\label{not in middle}
\end{equation}
By the inductive assumption there exists $i\inn$ such that:
\begin{equation}
\theta|f_{q_0}(x)| <  \theta\left(\left(\frac{1-\theta^i}{1-\theta}\right)\frac{2}{K^{1/p_{k+1}}}  + \frac{K^{1/p_0}}{K^{1/p_{k+1}}} + 2\frac{s(f_{q_0})^{1/p_{\xi_0}}}{s(f_{q_0})^{1/p_{k+1}}}\right).\label{but now I would like some corn flakes (without sugar please)}
\end{equation}
By the definition of size for functionals which are not of order-$0$ we have that $s(f_{q_0}) \geqslant s(f)$ and hence combining \eqref{not in middle} and \eqref{but now I would like some corn flakes (without sugar please)} we conclude that:
\begin{equation*}
|f(x)| < \left(\frac{1-\theta^{i+1}}{1-\theta}\right)\frac{2}{K^{1/p_{k+1}}}  + \theta\left(\frac{K^{1/p_0}}{K^{1/p_{k+1}}} + 2\frac{s(f)^{1/p_{\xi_0}}}{s(f)^{1/p_{k+1}}}\right).
\end{equation*}
\end{proof}

 The next proposition allows us to pass from a block sequence admitting an $\ell_{p_{\xi_0}}$ spreading model to a further block admitting $\ell_{p_1}$ spreading model and from block sequence admitting an $\ell_{p_{k}}$ spreading model to a further block admitting an $\ell_{p_{k+1}}$ spreading model. In the case that $\xi_0 <\omega$, we use this to show that the spreading models in every subspace are exactly $\ell_p$ for $p \in \{p_1,p_2,\cdots,p_{\xi_0-1},p_{\xi_0}\}$. In the case that $\xi_0=\omega$ we require an additional argument to show that we have $\ell_{p_k}$ spreading model for every $k<\omega$ since we are not able to show that every block subspace admits an $\ell_{p_{\xi_0}}$ spreading model.

\begin{prop}\label{block to change}
Let $(x_j)_j$ be a normalized block sequence in $X$.
\begin{enumerate}

\item[(i)] If $(x_j)_j$ generates an $\ell_{p_{\xi_0}}$ spreading model, then there exists a further normalized block sequence $(y_j)_j$ of $(x_j)_j$ that generates an $\ell_{p_1}$ spreading model.

\item[(ii)] If $1 \leqslant k <  \xi_0$ and $(x_j)_j$ generates an $\ell_{p_k}$ spreading model, then there exists a further normalized block sequence $(y_j)_j$ of $(x_j)_j$ that generates an $\ell_{p_{k+1}}$ spreading model.

\end{enumerate}
\end{prop}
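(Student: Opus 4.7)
For both parts the plan is to build $(y_j)$ as a block sequence of $\ell_{p_\eta}$-averages,
$$y_j = K_j^{-1/p_\eta}\sum_{t \in I_j} x_t,$$
where $\eta = \xi_0$ in Part~(i) and $\eta = k$ in Part~(ii), the intervals $I_j$ are successive with $K_j := \#I_j \to \infty$, and their positions are chosen very sparsely by a diagonal recursion. Since $(x_j)$ generates an $\ell_{p_\eta}$ spreading model, after a subsequence the norms $\|y_j\|$ lie in an interval $[\delta, 3]$ with $\delta > 0$ obtained from the witness functionals constructed below; the spreading model of the normalization of $(y_j)$ is then identified by its $\al$-indices via Proposition~\ref{sm versus index}.

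For Part~(i), set $\eta = \xi_0$, and for each $t$ pick a norming functional $h_t \in W$ with $\supp h_t \subseteq \supp x_t$ and $h_t(x_t) \geqslant 1 - 2^{-t}$. The functional
$$f_j = \theta \sum_{t \in I_j} K_j^{-1/p_{\xi_0}'} h_t$$
is, by construction, an order-$0$ functional of assigned size $s(f_j) = K_j$; the identity $1/p_{\xi_0} + 1/p_{\xi_0}' = 1$ gives $f_j(y_j) \geqslant \theta(1 - o(1))$ as $j \to \infty$. Arranging $(K_j, \min I_j)$ to grow fast enough ensures $(f_j)$ is very fast growing, whence $\al_{<1}\{(y_j)_j\} > 0$. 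Proposition~\ref{sm versus index}(b) combined with Proposition~\ref{general estimate} then produces a subsequence of $(y_j)$ generating an $\ell_{p_1}$ spreading model.

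For Part~(ii), set $\eta = k$. By Proposition~\ref{sm versus index} it suffices, after passing to a subsequence, to verify $\al_{<k'}\{(y_j)_j\} = 0$ for every $1 \leqslant k' \leqslant k$, and, when $k+1 < \xi_0$, also $\al_{<k+1}\{(y_j)_j\} > 0$. The vanishing part is handled by Lemma~\ref{switch sm1} in the case $k = 1$ (both $K_j$ and $s(f)$ diverge along any very fast growing sequence of order-$0$ functionals $f$, forcing the bound there to zero) and by Lemma~\ref{switch sm2} when $k \geqslant 2$: Proposition~\ref{sm versus index}(a) yields $\al_{<k-1}\{(x_j)_j\} = 0$, and Lemma~\ref{switch sm2} applied with its parameter $k$ taken to be our $k-1$ gives $\al_{<k}\{(y_j)_j\} = 0$, from which vanishing of the lower indices is immediate. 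If moreover $k+1 = \xi_0$ (only possible when $\xi_0$ is finite), this already supplies all the vanishing indices needed, and combined with the lower estimate of Proposition~\ref{general estimate} identifies the spreading model as $\ell_{p_{\xi_0}}$. For the positivity part, $\al_{<k}\{(x_j)_j\} > 0$ supplies $\ee' > 0$ and a very fast growing sequence $(h_t)_t$ of order-$<k$ functionals with $|h_t(x_t)| \geqslant \ee'$; then
$$g_j = \theta \sum_{t \in I_j} K_j^{-1/p_k'} h_t$$
(with the convention $K_j^{-1/\infty} = 1$ when $p_k = 1$) is an order-$k$ functional of $W$, and a direct estimate gives $|g_j(y_j)| \geqslant \theta \ee'$. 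Coordinating $I_j$ with the support and size growth of $(h_t)$, $(g_j)$ can be arranged to be very fast growing, so $\al_{<k+1}\{(y_j)_j\} > 0$ and Proposition~\ref{sm versus index}(a) identifies the spreading model as $\ell_{p_{k+1}}$.

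The main technical difficulty is the coordination of $K_j$, $\min I_j$, and the underlying witness sequences so that the constructed functionals $(f_j)$ in Part~(i), respectively $(g_j)$ in Part~(ii), are simultaneously admissible and very fast growing with sizes tending to infinity, while the hypotheses of Lemmas~\ref{switch sm1} and~\ref{switch sm2} apply along the same indices $j$. All of this is standard to arrange by a single diagonal recursion exploiting the unboundedness of $s(h_t)$ and of $\min\supp h_t$ along the witness sequences.
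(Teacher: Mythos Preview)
Your proof is correct and follows essentially the same approach as the paper's: both construct normalized $\ell_{p_\eta}$-averages together with order-$0$ (respectively order-$k$) witness functionals of growing size, invoke Lemma~\ref{switch sm1} (respectively Lemma~\ref{switch sm2}) to kill the lower $\al$-indices, and finish via Proposition~\ref{sm versus index}. The only cosmetic difference is that the paper treats the cases $\zeta=\xi_0$, $\zeta=1$, and $2\leqslant\zeta<\xi_0$ separately and leaves the last as ``identical to Case~2'', whereas you fold the latter two into a single Part~(ii).
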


\begin{proof}
Let $(x_j)_j$ be a normalized block sequence in $X$, generating an $\ell_{p_\zeta}$ spreading model, for some $1\leqslant\zeta\leqslant \xi_0$. Note that by Proposition \ref{general estimate} and Lemma \ref{upper estimate} we may assume that for every $K \leqslant j_1 < \cdots < j_K$ we have that $\|x_{j_1} + \cdots + x_{j_K}\| \leqslant 3\cdot K^{1/p_\zeta}$. We distinguish three cases concerning $\zeta$, namely $\zeta=\xi_0, \zeta = 1$, and $2 \leqslant \zeta < \xi_0$. We shall only consider the first two cases, as the last one is proved in an identical manner as the case $\zeta=1$ and uses Lemma \ref{switch sm2} instead of Lemma \ref{switch sm1}.

\noindent{\em Case 1:} $\zeta = \xi_0$. For every $j\inn$ choose $f_j\in W$ with $f_j(x_j) = 1$ and $\ran f_j \subset \ran x_j$. Choose an increasing sequence of finite subsets of the natural numbers $(E_j)_j$ with $\#E_j \leqslant \min E_j$ and $\lim_j\#E_j = \infty$.  For $j\inn$ define $y_j^\prime = (\#E_j)^{-1/p_{\xi_0}}\sum_{i\in E_j}x_i$, $y_j = \|y_j^\prime\|^{-1}y_j^\prime$ and $g_j = \theta\sum_{i\in E_j}(\#E_j)^{-1/p_{\xi_0}^\prime}f_i$. Then we have the following:
\begin{enumerate}

\item[(a)] The sequence $(y_j)_j$ is a normalized block sequence of $(x_j)_j$ and for every $j\inn$ we have that $g_j(y_j) \geqslant \theta/3$.

\item[(b)] The functional $g_j$ is of order-$0$ with $s(g_j) = \#E_j$ for all $j\inn$.

\end{enumerate}
Note that $\lim_js(g_j) = \infty$ and therefore, passing to a subsequence, we may assume that $(g_j)_j$ is a very fast growing sequence of functionals of order-$0$. We conclude that $\al_{<1}\{(x_j)\} > 0$ and by Proposition \ref{sm versus index} we have that $(y_j)_j$ is the desired sequence.

\noindent {\em Case 2:} $\zeta = 1$. By Proposition \ref{sm versus index} we have that $\al_{<1}\{(x_j)_j\} > 0$ and hence, by passing to a subsequence, there exists $\ee>0$ and a very fast growing  sequence $(f_j)_j$ of order-$0$ functionals such that $\ran f_j\subset \ran x_j$ and $f_j(x_j) > \ee$ for all $j\inn$. Choose an increasing sequence of finite subsets of the natural numbers $(E_j)_j$ with $\min E_j \leqslant \#E_j$ and $\lim_j\#E_j = \infty$.  For $j\inn$ define $y_j^\prime = (\#E_j)^{-1/p_1}\sum_{i\in E_j}x_i$, $y_j = \|y_j^\prime\|^{-1}y_j^\prime$ and $g_j = \theta\sum_{i\in E_j}(\#E_j)^{-1/p_1^\prime}f_j$ (if $p_1 = 1$ take $g_j = \theta\sum_{i\in E_j}f_j$ instead). Then we have the following:
\begin{enumerate}

\item[(a')] The sequence $(y_j)_j$ is a normalized block sequence of $(x_j)_j$ and for every $j\inn$ we have that $g_j(y_j) \geqslant \ee\theta/3$.

\item[(b')] The functional $g_j$ is of order-$1$ with $s(g_j) \geqslant \max\{s(f_i): i\in E_j\}$ for all $j\inn$.

\end{enumerate}
Once more, $\lim_js(g_j) = \infty$ and as before we conclude that $\al_{<2}\{(x_j)\} > 0$. By Proposition \ref{sm versus index} it remains to observe that $\al_{<1}\{(y_j)\}_j = 0$, which is an easy consequence of the definition of the $y_j$'s and Lemma \ref{switch sm1}.
\end{proof}

\begin{rem}
The proof of Proposition \ref{block to change} implies that the space $X$ does not admit an $\ell_{p_\zeta}^{\mathcal{S}_2}$ spreading model for any $1\leqslant\zeta\leqslant \xi_0$. For the definition of an $\ell_p^{\mathcal{S}_k}$ spreading model see \cite[Definition 1.1]{ABM}.
\end{rem}

\begin{cor}
The space $X$ is reflexive.
\end{cor}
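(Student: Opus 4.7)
The plan is to deduce reflexivity from the spreading-model machinery already developed, combined with James' characterization of reflexivity for unconditional bases: since $(e_i)_i$ is $1$-unconditional, $X$ is reflexive if and only if no normalized block basic sequence in $X$ is equivalent to the unit vector basis of $\ell_1$ or of $c_0$. I would argue by contradiction, assuming that some normalized block basic sequence $(x_j)_j$ in $X$ is equivalent to one of these two bases.

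The first key step is a stability observation: if $(x_j)_j$ is $C$-equivalent to the $\ell_1$-basis (respectively, the $c_0$-basis), then every normalized block sequence of $(x_j)_j$ is equivalent to the same basis, with a constant depending only on $C$. This is a routine consequence of the triangle inequality together with the $\ell_1$-upper-and-lower (respectively, $c_0$-upper-and-lower) coefficient estimates coming from $C$-equivalence. In particular, every spreading model admitted by any normalized block sequence of $(x_j)_j$ must be equivalent to the unit vector basis of $\ell_1$ (respectively, $c_0$).

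Next, I would apply Proposition \ref{sm versus index} to pass to a subsequence of $(x_j)_j$ generating an $\ell_{p_\zeta}$ spreading model for some $\zeta \in [1,\xi_0]$. Combined with the stability observation, this forces $p_\zeta = 1$ in the $\ell_1$ case (so $\zeta=1$ and $p_1=1$), and $p_\zeta = \infty$ in the $c_0$ case (so $\zeta=\xi_0$ and $p_{\xi_0}=\infty$).

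Finally, I would invoke Proposition \ref{block to change} to produce a further normalized block sequence $(y_j)_j$ of $(x_j)_j$ with a different spreading model: part (ii) with $k=1$ in the $\ell_1$ case yields an $\ell_{p_2}$ spreading model, while part (i) in the $c_0$ case yields an $\ell_{p_1}$ spreading model. Yet by the stability observation, $(y_j)_j$ remains equivalent to the original $\ell_1$- or $c_0$-basis, so its spreading model must still be $\ell_1$ or $c_0$, respectively. This forces $p_2 = p_1 = 1$ in the first case, and $p_1 = p_{\xi_0} = \infty$ in the second, each contradicting the strict inequalities $p_1 < \cdots < p_{\xi_0}$ built into the hypotheses on $F$. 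I do not expect any real obstacle here: the only mildly delicate point is the stability statement for normalized blocks of $c_0$- or $\ell_1$-equivalent sequences, which is a standard calculation rather than a serious difficulty.
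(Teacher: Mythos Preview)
Your argument is correct and is essentially the same as the paper's: both deduce from Proposition \ref{block to change} (together with Proposition \ref{sm versus index}) that neither $\ell_1$ nor $c_0$ embeds into $X$, and then invoke James' theorem for spaces with an unconditional basis. The paper compresses this into a single sentence, while you spell out the stability-of-blocks step and the case analysis explicitly, but the underlying mechanism is identical.
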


\begin{proof}
Proposition \ref{block to change} implies that neither $c_0$ nor $\ell_1$ embed into $X$. By James' well known theorem for spaces with an unconditional basis we conclude that $X$ is reflexive.
%it suffices to show that neither $\ell_1$, nor $c_0$  embed into $X$. Assume that there is a block sequence $(x_j)_j$ in $X$ equivalent to the unit vector basis of $\ell_1$. As $\ell_1$ is a spreaing model of $(x_j)_j$, by Proposition \ref{sm versus index} we conclude that $p_1 = 1$. By Proposition \ref{block to change} there is a normalized block sequence $(y_j)_j$ of $(x_j)_j$ that generates an $\ell_{p_2}$ spreadind model (or an $\ell_{p_{\xi_0}}$ spreading model if $\xi_0=1$). Since any seminormalized block sequence of $\ell_1$ is equivalent to the unit vector basis of $\ell_1$, this is absurd. With identical reasoning we conclude that $c_0$ does not embed into $X$.
\end{proof}

\begin{rem}
If $(z_j)_j$ is a spreading model generated by a non-norm convergent (not necesssarily Schauder basic) sequence in $X$, then \cite[Remark 5, page 581]{AKT} the reflexivity of the space and Proposition \ref{sm versus index} imply that, although the sequence $(z_j)_j$ need not be a Schauder basis for $Z = \overline{\langle\{z_j:\;j\inn\}\rangle}$, the space $Z$ must be isomorphic to $\ell_p$, for some $p\in F$.
\end{rem}

\begin{lem}\label{how to skip the smallest spreading model}
Let $1\leqslant k < \xi_0,$ $K\inn$ and $(x_j)_j$ be a sequence in $X$ generating an $\ell_{p_k}$ spreading model. Then for every $j_0\inn$ there exists a normalized vector $x\in\text{span}(x_j)_{j\geqslant j_0}$ and a functional $f$ of order-$0$ with $s(f) = K$ such that
\begin{equation}
f(x) \geqslant \frac{\theta}{3} K^{1/p_{\xi_0} - 1/p_k},
\end{equation}
where in the case $p_{\xi_0} = \infty$ we set $1/p_{\xi_0} = 0$.
\end{lem}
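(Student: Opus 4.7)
The plan is to produce $x$ as the normalization of an equal-weighted sum of $K$ suitably chosen terms of $(x_j)_{j\geqslant j_0}$ and to build $f$ by bundling norming functionals of those same terms into a single order-$0$ functional of size $K$. The bound then falls out by playing the upper $\ell_{p_k}$ estimate for the sum against the $\ell_{p_{\xi_0}}$-style evaluation that the order-$0$ functional produces.

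By Proposition \ref{sm versus index}, an $\ell_{p_k}$ spreading model forces $\al_{<k'}\{(x_j)_j\} = 0$ for every $1\leqslant k' < k$. After replacing $(x_j)_j$ by a tail beyond $j_0$, Lemma \ref{upper estimate} with $\zeta=k$ (when $k\geqslant 2$), or directly Proposition \ref{general estimate} (when $k=1$), gives a subsequence, still called $(x_j)_j$, satisfying $\|\sum_{i=1}^K x_{j_i}\|\leqslant 3K^{1/p_k}$ for all $K\leqslant j_1<\cdots<j_K$. I fix any such indices and set $y=\sum_{i=1}^K x_{j_i}$ and $x=y/\|y\|$, which lies in $\operatorname{span}(x_j)_{j\geqslant j_0}$ and is normalized.

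For each $i$, pick $f_i\in W$ with $f_i(x_{j_i})=1$ and $\supp f_i\subset\ran x_{j_i}$, exactly as in the proof of Proposition \ref{general estimate}. The resulting $f_1<\cdots<f_K$ are successive in $W$, so $f = \theta\sum_{i=1}^K K^{-1/\pze}f_i$ is a legitimate order-$0$ functional: its $K$ coefficients lie on the unit sphere of $\ell_{\pze}^K$ and $K\leqslant K$, which forces $s(f)=K$. A direct calculation gives $f(y) = \theta K\cdot K^{-1/\pze} = \theta K^{1/p_{\xi_0}}$ (using the convention $1/p_{\xi_0}=0$ when $p_{\xi_0}=\infty$), and dividing by $\|y\|\leqslant 3K^{1/p_k}$ produces $f(x) \geqslant (\theta/3)K^{1/p_{\xi_0}-1/p_k}$, as required. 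I do not anticipate a genuine obstacle here: Proposition \ref{sm versus index} and the upper-estimate lemmas have already done the real work, and what remains is a careful assembly of the order-$0$ functional together with a one-line computation.
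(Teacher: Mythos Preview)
Your argument is correct and follows essentially the same route as the paper: choose $K$ terms far enough out so that the upper $\ell_{p_k}$ estimate from Lemma~\ref{upper estimate} (via Proposition~\ref{sm versus index}) bounds $\|\sum x_{j_i}\|$ by $3K^{1/p_k}$, norm each term by some $f_i\in W$ supported in its range, bundle these into the order-$0$ functional $f=\theta\sum_i K^{-1/\pze}f_i$ of size $K$, and divide. The paper's proof is just a terser version of exactly this construction.
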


\begin{proof}
We may clearly assume that $(x_j)_j$ is normalized. Proposition \ref{sm versus index} in conjunction with \ref{upper estimate} imply that we may choose $j_0 \leqslant j_1 < \cdots <j_K$ such that if $y = K^{-1/p_k}\sum_{i=1}^Kx_{j_i}$ then $\|y\| \leqslant 3$. Choose $f_1,\ldots, f_K$ with $\ran f_i\subset \ran x_{j_i}$ and $f_i(x_{j_i}) = 1$ for $i=1,\ldots,K$ and define $f = \theta\sum_{i=1}^K(1/K)^{-1/p_{\xi_0}^\prime}f_i$ and  $x = y/\|y\|$. 
\end{proof}

\begin{thm}\label{first part main theorem}
Let $F =\{p_\zeta:\;1\leqslant \zeta\leqslant \xi_0\}$ and let $Y$ be an infinite dimensional subspace of $X$. Then there exists a dense subset  $G$ of $F$ such that the spreading models admitted by $Y$ are exactly the $\ell_{p}$, for $p\in G$. In particular, $\ell_{p}$ is finitely block represented in every block subspace of $X$ for every $p\in F$.
\end{thm}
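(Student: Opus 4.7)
The plan is first to reduce via a standard perturbation argument to studying spreading models of block sequences, then to produce the required $\ell_p$'s by combining the forward cycling of Proposition \ref{block to change} with Lemma \ref{how to skip the smallest spreading model} to handle the case $\xi_0=\omega$. Since $X$ has an unconditional basis, a gliding-hump extraction shows that $Y$ contains a basic sequence $(u_n)_n$ which is $(1+\varepsilon)$-equivalent to a normalized block sequence $(x_n)_n$ of the basis of $X$; as spreading models are preserved under small-constant equivalence, it is enough to analyze spreading models of block sequences of $(x_n)_n$. Proposition \ref{sm versus index} gives $G\subseteq F$ immediately, since every normalized block sequence in $X$ has a subsequence generating some $\ell_{p_\zeta}$ spreading model with $\zeta\in[1,\xi_0]$.

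To establish density, fix a normalized block sequence $(x_n)_n$ (obtained from $Y$ as above) generating $\ell_{p_{\zeta_0}}$ for some $\zeta_0\in[1,\xi_0]$. Iterated applications of Proposition \ref{block to change}(ii) yield nested normalized block sequences $(x_n^{(k)})_n$ generating $\ell_{p_k}$ for every $k$ with $\zeta_0\leqslant k<\xi_0$, and also for $k=\xi_0$ when $\xi_0$ is finite. If $\xi_0$ is finite, Proposition \ref{block to change}(i) applied to the $\ell_{p_{\xi_0}}$-generating sequence yields an $\ell_{p_1}$-generating block sequence, and further applications of (ii) exhaust the remaining $\ell_{p_k}$'s with $1\leqslant k<\zeta_0$; thus $G=F$ in the finite case.

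The main obstacle is the case $\xi_0=\omega$ with $\zeta_0\geqslant 2$: we cannot in general produce an $\ell_{p_\omega}$ spreading model in $Y$, so the downward step of Proposition \ref{block to change}(i) is unavailable and $\ell_{p_1}$ must be reached by another route. I inductively construct a block sequence $(y_m)_m$ of $(x_n)_n$ together with a very fast growing sequence of order-$0$ functionals $(f_m)_m$ uniformly separated from zero on $(y_m)_m$. Assuming $(y_i,f_i)_{i<m}$ have been built, set $M:=\max\supp y_{m-1}$; first choose $K_m>M$, then pick $k_m<\omega$ large enough that $(1/p_{k_m}-1/p_\omega)\log K_m\leqslant\log 2$, which is feasible since $1/p_{k_m}\to 1/p_\omega$ and forces $K_m^{1/p_{\xi_0}-1/p_{k_m}}\geqslant 1/2$. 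Next choose $j_0^{(m)}$ with $\min\supp x_{j_0^{(m)}}^{(k_m)}>M^2$ and apply Lemma \ref{how to skip the smallest spreading model} to the sequence $(x_n^{(k_m)})_n$ with these parameters to obtain a normalized block vector $y_m$ and an order-$0$ functional $f_m$ with $s(f_m)=K_m$, $\supp f_m\subseteq\supp y_m$, and $|f_m(y_m)|\geqslant\theta/6$. The support conditions guarantee that $(y_m)_m$ is a block sequence of $(x_n)_n$ and that $(f_m)_m$ is very fast growing: $s(f_m)=K_m>M\geqslant\max\supp f_{m-1}$, while $\min\supp f_m\geqslant\min\supp y_m>M^2\geqslant(\max\supp f_{m-1})^2$. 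Hence $\alpha_{<1}\{(y_m)_m\}>0$, and by Proposition \ref{sm versus index}(b) a subsequence of $(y_m)_m$ generates $\ell_{p_1}$. Iterating Proposition \ref{block to change}(ii) starting from this $\ell_{p_1}$-generating block then yields $\ell_{p_k}$-generating blocks for every $1\leqslant k<\omega$, so $\{p_k:k<\omega\}\subseteq G$ and $G$ is dense in $F$.

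For the ``in particular'' assertion, each $p\in G$ is finitely block represented in $Y$ directly from the definition of spreading model. Applied to any block subspace $Z$ of $X$, the main conclusion of the theorem gives a dense subset of $F$ consisting of $p$'s that are finitely block represented in $Z$; since the set of such $p$'s is closed in $[1,\infty]$ by Rosenthal's stabilization theorem, it must equal $\overline{G}=F$, completing the proof.
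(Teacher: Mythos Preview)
Your proof is correct and follows essentially the same route as the paper's: reduce to block sequences, use Proposition \ref{sm versus index} to confine spreading models to $F$, cycle forward via Proposition \ref{block to change}, and in the case $\xi_0=\omega$ build a block sequence with $\alpha_{<1}>0$ by repeatedly invoking Lemma \ref{how to skip the smallest spreading model} with $k$ chosen large enough (depending on the desired size $K$) to make $K^{1/p_{\xi_0}-1/p_k}$ bounded below. The only substantive quibble is the justification of the ``in particular'' clause: the closedness of the set of Krivine $p$'s is not Rosenthal's stabilization theorem but rather the elementary observation that for fixed $N$ the unit vector bases of $\ell_{p_n}^N$ and $\ell_p^N$ are $(1+\varepsilon)$-equivalent once $p_n$ is close enough to $p$; your inference $\overline{G}=F$ is correct, just mislabelled.
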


\begin{proof}
Let $Y$ be a block subspace of $X$. We observe the following:
\begin{enumerate}

\item[(i)] Every spreading model admitted by $Y$ is equivalent to the unit vector basis of $\ell_{p_\zeta}$ for some $1\leqslant\zeta\leqslant \xi_0$. In particular, there exists $1\leqslant \zeta_0 \leqslant \xi_0$ such that $Y$ admits an $\ell_{p_{\zeta_0}}$ spreading model.

\item[(ii)] If $1\leqslant k <\xi_0$ and $Y$ admits an $\ell_{p_k}$ spreading model and, then $Y$ also admits an $\ell_{p_{k+1}}$ spreading model.

\item[(iii)] If $Y$ admits an $\ell_{p_{\xi_0}}$ spreading model then $Y$ also admits an $\ell_{p_1}$ spreading model.

\item[(iv)] There exists $1\leqslant k_0 <\xi_0$ such that $Y$ admits an $\ell_{p_{k_0}}$ spreading model.

\end{enumerate}
The statement (i) follows from Proposition \ref{sm versus index}. Statements (ii) and (iii) follow  from Proposition \ref{block to change}, while (iv) follows from the first and the third. We now distinguish two cases regarding whether $\xi_0$ is finite or not.

{\em Case 1:} If $\xi_0$ is finite, statement (i), statement (ii), and a finite inductive argument yield that $Y$ admits an $\ell_{p_{\xi_0}}$ spreading model. By (iii) we have that $Y$ admits an $\ell_{p_1}$ spreading model. Once more, by a finite induction we obtain that $G = F$ is the desired set.

{\em Case 2:} If $\xi_0 = \omega$ we shall prove that for every $1\leqslant k <\xi_0$, $Y$ admits an $\ell_{p_k}$ spreading model. This in particular implies that $G = F$ or $G=F\setminus\{p_{\xi_0}\}$ is the desired set. By (ii) it is sufficient to show that $Y$ admits an $\ell_{p_1}$ spreading model. By Proposition \ref{sm versus index} it is enough to find a normalized block sequence $(x_j)_j$ in $Y$ and  a very fast growing sequence of functionals $(f_j)_j$ of order-$0$ with $f_j(x_j) > \theta/4$, i.e. $\al_{<1}\{(x_j)_j\} > 0$.

Choose a normalized vector $x_1$ in $Y$ and a functional $f\in W$ with $f(x_1) = 1$ and set $f_1 = \theta f$. Then $f_1$ is of order-$0$ with $s(f) = 1 $ and $f_1(x_1) > \theta/4$. Assume that we have chosen normalized vectors $x_1 < \cdots < x_j$ and a very fast growing sequence  of functionals $f_1,\ldots,f_j$ of order-$0$ with $f_i(x_i) > \theta/4$ for $i=1,\ldots,j$. By (iv), there exists $1\leqslant k_0<\xi_0$ such that $Y$ admits an $\ell_{p_{k_0}}$ spreading model.
Fix $K > \max\supp f_j$ and choose $k_0\leqslant k <\xi_0$  such that $K^{1/p_{\xi_0} - 1/p_k} > 3/4$ (recall that $\lim_kp_k = p_{\xi_0}$). By (ii) we may choose a sequence $(y_i)_i$ in $Y$ generating an $\ell_{p_k}$ spreading model. Choose $i_0\inn$ with $\min\supp y_{i_0} \geqslant (\max\supp x_j)^2$ and apply Lemma \ref{how to skip the smallest spreading model} to find the desired pair $x_{j+1}, f_{j+1}$.
\end{proof}

\begin{rem}
In the case that $F$ is finite, then clearly the spreading models admitted by every block subspace of $X$ are exactly the $\ell_p$, for $p\in F$. In the case that $F$ consists of an increasing sequence and its limit $p_{\xi_0}$, then it is easily checked that exactly one of the following holds:
\begin{itemize}

\item[(i)] The spreading models admitted by every block subspace of $X$ are exactly the $\ell_p$, for $p\in F$.

\item[(ii)] There exists  a block subspace $Y$ of $X$, such that the spreading models admitted by every further block subspace of $Y$ are exactly the $\ell_p$, for $p\in F\setminus\{p_{\xi_0}\}$.

\end{itemize}
We were unable to determine which one of the above holds, in either case  however on some subspace $Y$ of $X$, the set of spreading models admitted by every further subspace of $Y$ is stabilized.
\end{rem}

\section{The set of Krivine $p$'s of the space $X$.}
In this section we prove that for any $p\notin F = \{p_\zeta:\; 1\leqslant\zeta\leqslant \xi_0\}$,  $\ell_p$ is not finitely block represented in the space $X$. We conclude that the set of $p$'s that are finitely block represented in every block subspace of $X$ is exactly the set $F$, which is not connected.

%We begin with the following which is proved using Lemma \ref{action analysis} and uses similar reasoning as the proof of Lemma \ref{switch sm1}.

We begin with the following Lemma, whose proof we omit as it follows from
the same argument as the proof of Lemma \ref{switch sm1}.

\begin{lem}\label{wrong order wont norm}
Let $p\in[p_1,p_{\xi_0}]\setminus F$. Suppose $\ee>0$ and $(x_j)_{j=1}^N$ is a finite block sequence in $X$ which is $(1+\ee)$-equivalent to the unit vector basis of $\ell_p^N$. If $1\leqslant \zeta \leqslant \xi_0$ is such that $p < p_\zeta$ and $f$ is a functional of order-$\zeta$, then we have the following estimate:
\begin{equation}
\left|f\left(\frac{1}{N^{1/p}}\sum_{j=1}^Nx_j\right)\right| < (1+\ee)\left(\frac{N^{1/p_{\zeta}}}{N^{1/p}} + 2\theta\right).
\end{equation}
\end{lem}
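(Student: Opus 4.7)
The plan is to mimic the proof of Lemma \ref{switch sm1} essentially verbatim, now with the $\ell_p$-equivalence of $(x_j)_j$ playing the role that normalization together with the 3-domination played there. Write $f=\theta\sum_{q=1}^d c_q f_q$ with $\sum_q|c_q|^{p_\zeta'}\leqslant 1$, and apply the $A_1/A_2$ decomposition of the preceding notation to the block sequence $x_1<\cdots<x_N$. This gives, via inequality \eqref{E2},
\begin{equation*}
\left|f\left(\sum_{j=1}^N x_j\right)\right|\leqslant \left\|(g_j(x_j))_{j\in B}\right\|_{\ell_{p_\zeta}}+\theta\left\|\left(f_q\left(\sum_{j\in E_q}x_j\right)\right)_{q\in A_2}\right\|_{\ell_{p_\zeta}}.
\end{equation*}

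The first step is to bound the ``local'' term. Since every element of $W$ has norm at most $1$ as a functional on $X$, we have $|g_j(x_j)|\leqslant\|x_j\|\leqslant\sqrt{1+\ee}$ by the $(1+\ee)$-equivalence of $(x_j)_{j=1}^N$ with the unit vector basis of $\ell_p^N$. Since $\#B\leqslant N$, this bounds the first term by $\sqrt{1+\ee}\,N^{1/p_\zeta}$. Dividing by $N^{1/p}$ will produce exactly the $N^{1/p_\zeta}/N^{1/p}$ in the conclusion.

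The second step is the ``overlapping'' term. Here $|f_q(\sum_{j\in E_q}x_j)|\leqslant \|\sum_{j\in E_q}x_j\|\leqslant \sqrt{1+\ee}\,(\#E_q)^{1/p}$, again by the upper $\ell_p$-estimate. The key inequality to exploit is that $p_\zeta/p\geqslant 1$, together with the bound $\sum_{q\in A_2}\#E_q\leqslant 2N$ from the final sentence of Lemma \ref{some details}. Since $\ell_{p_\zeta/p}\hookrightarrow\ell_1$ for sequences, one has
\begin{equation*}
\left(\sum_{q\in A_2}(\#E_q)^{p_\zeta/p}\right)^{p/p_\zeta}\leqslant \sum_{q\in A_2}\#E_q\leqslant 2N,
\end{equation*}
whence the $\ell_{p_\zeta}$-norm in the second term is at most $\sqrt{1+\ee}\,(2N)^{1/p}\leqslant 2\sqrt{1+\ee}\,N^{1/p}$.

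Combining the two estimates and dividing by $N^{1/p}$ yields $|f(N^{-1/p}\sum_j x_j)|\leqslant\sqrt{1+\ee}(N^{1/p_\zeta}/N^{1/p}+2\theta)\leqslant(1+\ee)(N^{1/p_\zeta}/N^{1/p}+2\theta)$, since $\sqrt{1+\ee}\leqslant 1+\ee$. No obstacle is really expected; the only point requiring a moment of thought is the interaction between the $\ell_{p_\zeta'}$-constraint on the coefficients of $f$ and the $\ell_p$-structure on the block sequence, and this is precisely resolved by the two occurrences of $p\leqslant p_\zeta$ used above (once to bound the number of indices in $B$ by $N$, and once to embed $\ell_{p_\zeta/p}$ into $\ell_1$). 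The hypothesis $p\notin F$ is not actually used in the computation itself; it is relevant to the application of the lemma rather than to its proof.
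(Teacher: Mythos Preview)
Your proof is correct and follows precisely the approach the paper intends: the paper omits the proof of this lemma, stating only that it is the same argument as Lemma~\ref{switch sm1}, and your adaptation via the $A_1/A_2$ decomposition and inequality~\eqref{E2} is exactly that. Two cosmetic remarks: the embedding you invoke goes the other way (it is $\ell_1\hookrightarrow\ell_{p_\zeta/p}$, i.e.\ $\|\cdot\|_{p_\zeta/p}\leqslant\|\cdot\|_1$), though the displayed inequality you derive from it is correct; and in your closing parenthetical, $\#B\leqslant N$ holds trivially regardless of $p$, so the only genuine use of $p<p_\zeta$ in the argument is in bounding the second term.
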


The next lemma follows directly from the above lemma.

\begin{lem}
Suppose that $(x_j)_{j=1}^N$ is a finite block sequence in $X$ that is $(1+\ee)$-equivalent to the unit vector basis of $\ell_p^N$, $1\leqslant k < \xi_0$ satisfies $p<p_{k+1}$ and $N$ satifies
$$N^{1/p_{k+1}-1/p}+2\theta < (1+\ee)^{-2}.$$
If $f \in W$ satisfies $f(N^{-1/p}\sum_{j=1}^N x_j) \geqslant 1/(1+\ee)$
then $f$ {has non-zero order less than or equal to $k$}.\label{large size is bad}
\end{lem}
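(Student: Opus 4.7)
The plan is to prove the contrapositive by a direct case-split, leaning on the preceding lemma (Lemma \ref{wrong order wont norm}) as the workhorse. Set $x = N^{-1/p}\sum_{j=1}^N x_j$ and recall the convention that order-$0$ and order-$\xi_0$ designate the same class of functionals. I will show that if $f\in W$ is either a basis vector, or has every one of its assigned orders in $\{k+1,k+2,\ldots,\xi_0\}$ (equivalently, $f$ has no order in $\{1,\ldots,k\}$), then $|f(x)| < (1+\ee)^{-1}$, which contradicts the hypothesized lower bound.

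First I would handle the basis-vector case. If $f = \pm e_i^*$ then, since the $x_j$ are a normalized block sequence, $|f(x)| \leqslant N^{-1/p}$. Using $1/p_{k+1}\geqslant 0$ and $N\geqslant 1$, we have $N^{-1/p}\leqslant N^{1/p_{k+1}-1/p}$, and the hypothesis on $N$ gives $|f(x)| < (1+\ee)^{-2} < (1+\ee)^{-1}$, as wanted.

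For the remaining case, suppose $f$ is not a basis vector and pick any order $\zeta$ of $f$ with $k+1\leqslant \zeta \leqslant \xi_0$. Since $p < p_{k+1} \leqslant p_\zeta$, Lemma \ref{wrong order wont norm} applies and yields
\[
|f(x)| \;<\; (1+\ee)\bigl(N^{1/p_\zeta - 1/p} + 2\theta\bigr).
\]
Because $p_\zeta \geqslant p_{k+1}$ and $N\geqslant 1$, one has $N^{1/p_\zeta - 1/p} \leqslant N^{1/p_{k+1}-1/p}$, and then the hypothesis $N^{1/p_{k+1}-1/p} + 2\theta < (1+\ee)^{-2}$ yields $|f(x)| < (1+\ee)\cdot(1+\ee)^{-2} = (1+\ee)^{-1}$, again contradicting the assumption $f(x)\geqslant 1/(1+\ee)$. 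Hence $f$ must have at least one assigned order in $\{1,\ldots,k\}$.

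I don't anticipate a real obstacle here: once Lemma \ref{wrong order wont norm} is in hand, the argument is essentially monotonicity of $p_\zeta$ in $\zeta$. The only mild bookkeeping point is that the order-$0$ functionals (whose coefficient constraint is indexed by $p_{\xi_0}'$) are simultaneously order-$\xi_0$ functionals, and it is precisely the $\zeta = \xi_0$ instance of Lemma \ref{wrong order wont norm} that disposes of them.
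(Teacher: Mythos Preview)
Your proof is correct and is exactly the argument the paper has in mind when it writes ``follows directly from the above lemma'': rule out basis vectors by the trivial bound $N^{-1/p}$, and rule out any order $\zeta\geqslant k+1$ by feeding $p<p_{k+1}\leqslant p_\zeta$ into Lemma~\ref{wrong order wont norm} and using monotonicity $N^{1/p_\zeta-1/p}\leqslant N^{1/p_{k+1}-1/p}$ together with the hypothesis on $N$. One cosmetic point: Lemma~\ref{wrong order wont norm} is stated under the standing assumption $p\in[p_1,p_{\xi_0}]\setminus F$, whereas the present lemma does not list that hypothesis; this is harmless since the estimate of Lemma~\ref{wrong order wont norm} (whose proof mirrors Lemma~\ref{switch sm1}) only uses $p<p_\zeta$ and the $(1+\ee)$-equivalence, and in any case the lemma is only ever invoked in the proof of Theorem~\ref{second part main theorem} where $p\in[p_1,p_{\xi_0}]\setminus F$ is in force.
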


We are now ready to prove the second main theorem.

\begin{thm}\label{second part main theorem}
For all $p\in [1,\infty]\setminus F$ there exists $K\in\N$ and $\vp>0$ such that no
block sequence $(x_j)_{j=1}^K$ in $X$ is $(1+\vp)$-equivalent to
the unit vector basis of $\ell_p^K$.
\end{thm}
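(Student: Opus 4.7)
My plan is to divide the proof by the position of $p$.

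If $p \notin [p_1, p_{\xi_0}]$, Proposition~\ref{general estimate} closes the case immediately. For $p < p_1$, the upper $\ell_{p_1}$-estimate forces $\|\sum_{j=1}^K x_j\| \leqslant 2K^{1/p_1}$ for every block sequence, which is incompatible with the lower bound $K^{1/p}/(1+\varepsilon)$ coming from $(1+\varepsilon)$-equivalence to $\ell_p^K$ as soon as $K^{1/p-1/p_1} > 2(1+\varepsilon)$; the case $p > p_{\xi_0}$ is symmetric using the lower $\ell_{p_{\xi_0}}$-estimate with constant $\theta$.

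The main case is $p \in (p_k, p_{k+1})$ for some $1 \leqslant k < \xi_0$. I will argue by contradiction: assume $(x_j)_{j=1}^K$ is $(1+\varepsilon)$-equivalent to $\ell_p^K$ for $\varepsilon$ small and $K$ large satisfying $K^{1/p_{k+1}-1/p} + 2\theta < (1+\varepsilon)^{-2}$, set $x = K^{-1/p}\sum_{j=1}^K x_j$, and select $f \in W$ with $|f(x)| \geqslant 1/(1+\varepsilon)$. By Lemma~\ref{large size is bad}, the order $k''$ of $f$ must satisfy $1 \leqslant k'' \leqslant k$, and the task becomes to exclude each such order.

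To do this I will run a finite induction on $k''$ from $0$ to $k$, producing thresholds $K_{k''} \in \mathbb{N}$ and constants $\eta_{k''} < 1/(1+\varepsilon)$ with the property: for $K \geqslant K_{k''}$ and any $(1+\varepsilon)$-$\ell_p^K$-equivalent block sequence, every $f \in W$ whose order belongs to $\{0,1,\ldots,k''\}$ satisfies $|f(K^{-1/p}\sum x_j)| \leqslant \eta_{k''}$. The base case $k''=0$ is exactly Lemma~\ref{wrong order wont norm} applied with $\zeta = \xi_0$, using $\theta \leqslant 1/4$ and $\varepsilon < \sqrt{2}-1$. For the step, I decompose a given $f$ of order $k''$ as $f = \theta\sum c_qf_q$ with each $f_q$ of order $< k''$, and apply inequality~\eqref{E2}. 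The singleton term is bounded by $K^{1/p_{k''}-1/p} = o(1)$, and in the $A_2$-sum I invoke the inductive hypothesis on each $f_q$ restricted to its cluster $E_q$ (noting that $(x_j)_{j \in E_q}$ inherits $(1+\varepsilon)$-equivalence to $\ell_p^{|E_q|}$): whenever $|E_q| \geqslant K_{k''-1}$, I obtain $|f_q(\sum_{j \in E_q}x_j)| \leqslant \eta_{k''-1}|E_q|^{1/p}$. A H\"older estimate together with $\sum |E_q| \leqslant 2K$ from Lemma~\ref{some details} then converts this into a bound of the form $\theta \cdot 2^{1/p}(1+\varepsilon)\eta_{k''-1}$, which, since $\theta \leqslant 1/4$ and $\eta_{k''-1} < 1/(1+\varepsilon)$, lies strictly below $1/(1+\varepsilon)$ and furnishes the desired $\eta_{k''}$.

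The principal technical obstacle will be the uniform control of those indices $q$ with $|E_q| < K_{k''-1}$, where only the trivial bound $|f_q(\sum_{j \in E_q}x_j)| \leqslant (1+\varepsilon)K_{k''-1}^{1/p}$ is available. I plan to dominate the aggregate of these contributions by exploiting the admissibility bound $d \leqslant \min\supp f_1$ in tandem with the very fast growing condition $\min\supp f_q > (\max\supp f_{q-1})^2$: together these sharply restrict the number of $f_q$ that effectively intersect $\bigcup_j\supp x_j$, so that after normalization by $K^{-1/p}$ the resulting correction is dominated by the main term once $K$ is chosen large enough relative to $K_{k''-1}$. Completing the induction at $k''=k$ then yields $|f(x)| \leqslant \eta_k < 1/(1+\varepsilon)$, contradicting the choice of $f$ and establishing the theorem.
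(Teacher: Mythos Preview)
Your handling of $p\notin[p_1,p_{\xi_0}]$ matches the paper and is fine. The problem lies in the inductive step for $p\in(p_k,p_{k+1})$, and there is a genuine gap there.

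When $f$ has order $k''\leqslant k$ you have $p_{k''}\leqslant p_k<p$, so $1/p_{k''}-1/p>0$. Your assertion that ``the singleton term is bounded by $K^{1/p_{k''}-1/p}=o(1)$'' has the sign backwards: this quantity tends to infinity, not to zero. The same difficulty infects the $A_2$ part. The inductive hypothesis only furnishes $|f_q(\sum_{j\in E_q}x_j)|\leqslant\eta_{k''-1}|E_q|^{1/p}$ with $\eta_{k''-1}$ close to $1$; combining these in the $\ell_{p_{k''}}$ norm and using $\sum_q|E_q|\leqslant 2K$ yields at best a bound of order $|A_2|^{1/p_{k''}-1/p}$, again not small. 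Admissibility gives $d\leqslant\min\supp f_1\leqslant\max\supp x_1$, but $\max\supp x_1$ bears no relation to $K$, so neither admissibility nor the very fast growing condition rescues the estimate. In short, there is no mechanism in your scheme to bound order-$k''$ functionals from above on an $\ell_p$-block, precisely because the $\ell_{p_{k''}}$ combination is \emph{stronger} than $\ell_p$.

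The paper sidesteps this by not seeking an upper bound at all. It arranges $K=(N-1)M+1$ vectors so that $M$ distinct $N$-blocks $(x_1,x_2^m,\ldots,x_N^m)$ all share the \emph{same} first vector $x_1$, with $M<\min\supp x_1$. A norming functional $g_m$ for each $N$-block must have order $\leqslant k$ (Lemma~\ref{large size is bad}); the shared $x_1$ forces both $g_m(x_1)$ and $g_m(x_N^m)$ to exceed a fixed $\Theta>0$ and pins down a pivot index $q_m$ in the decomposition $g_m=\theta\sum_q c_{m,q}f_{m,q}$. The tails $(f_{m,q})_{q>q_m}$ can then be \emph{concatenated across $m$} into one admissible, very fast growing family, producing a single order-$k$ functional in $W$ that witnesses a \emph{lower} $\ell_{p_k}$-estimate on $\sum_m x_N^m$. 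Since $p_k<p$, this contradicts the assumed $(1+\varepsilon)$-equivalence to $\ell_p$. The idea you are missing is this gluing construction, which turns the order constraint into a lower bound rather than chasing an unachievable upper bound.
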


\begin{proof}

Let $p\in [1,\infty]\setminus F$. If $p \notin [p_1,p_{\xi_0}]$, then the result clearly follows from Proposition \ref{general estimate}. Otherwise, we have that $p\in[p_1,p_{\xi_0}]\setminus F$. Find $k \inn$ so that $p_k< p < p_{k+1}$. Find $N,M \inn$ and $\ee>0$ as follows: 

Choose $N\in\N$ such that
\begin{eqnarray}
N^{1/p}&>&2+\theta(N-2)^{1/p}\;\text{and}\label{N fixed for good 1}\\
\frac{N^{1/p_{k + 1}}}{N^{1/p}} &<& 1 - 2\theta.\label{N fixed for good 2}
\end{eqnarray}
Now that $N$ is fixed,
we choose $\ee>0$ such that:
\begin{eqnarray}
\frac{1}{1+\vp}N^{1/p}&>&2+(1+\vp)\theta(N-2)^{1/p},\label{e fixed for good 1}\\
N^{1/p}&>&(1+\vp)^2 (N-1)^{1/p}\;\text{and}\label{e fixed for good 2}\\
\frac{N^{1/p_{k + 1}}}{N^{1/p}} &<& \frac{1}{(1+\ee)^2}-2\theta\label{e fixed for good 3}.
\end{eqnarray}
We set
\begin{equation}
\Theta =\frac{1}{1+\vp}N^{1/p}-(1+\vp) (N-1)^{1/p}.\label{this is Theta}
\end{equation}
Notice (\ref{e fixed for good 2}) implies that $\Theta >0$. 
Finally, let $M \inn$ so that
\begin{equation}
M^{1/p_{k}}\Theta>(1+\vp)M^{1/p}. \label{here is M}
\end{equation}

Let $K=(N-1)M+1$ and consider the following normalized block sequence which{, towards a contradiction,} we assume is $(1+\ee)$-equivalent to the unit vector basis of $\ell_p^{K}$  and that $M<\min \supp x_1$.
$$x_1 < x_2^1 < x_3^1< \cdots < x_N^1< x_2^2 < x_3^2 < \cdots <x_{N}^{M-1}< x_2^M < \cdots < x_N^M.$$
(i.e. $x_j^m < x_i^m$ for $i<j$ and $x_N^m<x_2^{m+1}$). { Let us mark the following, which is obviously true.
\begin{itemize}
\item[(a)] For each $m$ with $1 \leqslant m \leqslant M$ the block sequence  $x_1 < x_2^m < \cdots < x_N^m$ is $(1+\ee)$-equivalent to the unit vector basis of $\ell_p^N$. 
\end{itemize}
}

Fix $m$ with $1 \leqslant m \leqslant M$. For notational reasons we set $x^m_1=x_1$. Find $g_m \in W$ with $g_m( \sum_{i=1}^N x_i^m) \geqslant \frac{N^{1/p}}{(1+\ee)}$. By Lemma \ref{large size is bad} and (\ref{e fixed for good 3}) we conclude that $g_m$ has non-zero order less than or equal to $k$. Let
$$g_m = \theta \sum_{q=1}^{d_m} c_{m,q}f_{m,q}.$$
be the functionals decomposition according to Remark \ref{useful for inductive proofs on the norming set and other such stuff}{, i.e.:
\begin{itemize}
\item[(b)] the coefficients $(c_{m,q})_{q=1}^{d_m}$ are in the unit ball of $\ell_{p_k^\prime}$ for $m=1,\ldots,M$ and

\item[(c)] the sequence $(f_{m,q})_{q=1}^{d_m}$ is an admissible and very fast growing sequence of functionals, each one of which has order strictly smaller than $k$.
\end{itemize}
}
Define 
$$q_m= \min \{q :   \min \supp x_2^m\leqslant \max\supp f_{m,q}\}$$
We will prove the following three claims:
\begin{itemize}
\item[(i)] $g_m(x_1) >\Theta$, $g_m(x_N^m) >\Theta$ and $d_m \leqslant \max\supp x_1$.
\item[(ii)] The number $q_m$ exists, $\max\supp f_{m,q_m} < \min \supp x_{N}^m$ and $q_m < d_m$.
\item[(iii)] $(\min\supp x_2^m)^2 < \min \supp f_{m,q_m+1}$ and $\min\supp x_2^m < s(f_{m,q_m+1})$.
\end{itemize}

Item (i): Using  { that $g_m( \sum_{i=1}^N x_i^m) \geqslant \frac{N^{1/p}}{(1+\ee)}$, (a)} and (\ref{this is 
Theta}) we have
\begin{equation}
\begin{split}
g_m(x_1)& =g_m\left(\sum_{j=1}^N x_j^m\right) - g_m\left(\sum_{j=2}^N x_j^m\right)\\
&  \geqslant \frac{1}{1+\vp}N^{1/p}-(1+\vp) (N-1)^{1/p} = \Theta >0.
\end{split}
\end{equation}
The same argument works to show that $g_m(x_N^m) >\Theta$. If $d_m > \max\supp x_1$ then $\max \supp x_1 < \min \supp g_m$ which implies that $g_m(x_1)=0$. This contradiction tells us that $d_m \leqslant \max\supp x_1$.

Item (ii): If $q_m$ did not exist then $\min\supp x_2^m> \max\supp f_{m,q}$ for all $q$ and so $g_m(\sum_{j=1}^N x_j^m)= g_m(x_1) \leqslant 1$. On the other hand we clearly have $g_m(\sum_{j=1}^N x_j^m)>2$ and so $q_m$ exists. 

If $\max\supp f_{m,q_m} \geqslant \min \supp x_{N}^m$ then we have:
\begin{equation}
\begin{split}
g_m\left(\sum_{j=1}^N x_j^m\right)& =g_m(x_1) + \theta c_{m,q_m} f_{m,q_m}\left(\sum_{j=2}^{N-1} x_j^m\right)+ g_m(x_N^m)\\
& \leqslant 2 + (1+\vp)\theta(N-2)^{1/p} < \frac{N^{1/p}}{(1+\ee)}.
\end{split}
\end{equation}
The last inequality uses (\ref{e fixed for good 1}). This contradicts that fact that $g_m(\sum_{j=1}^N x_j^m)\geqslant \frac{N^{1/p}}{(1+\ee)}$. 

Using item (i) we have $g_m(x_N^m) > \Theta$. This fact combined with the fact that $\max\supp f_{m,q_m} < \min \supp x_{N}^m$ gives us $q_m<d_m$. 

Item (iii): By definition of $q_m$ and the fact that $(f_{m,q})_{q=1}^{d_m}$ is very fast growing  
$$(\min \supp x_2^m)^2\leqslant (\max\supp f_{m,q_m})^2 < \min \supp f_{m,q_m+1} \mbox{ and } $$
$$\min \supp x_2^m < s(f_{m,q_m+1}). $$ 
This proves (iii).

Note that $q_m+1 \leqslant d_m$ by item (ii). Define
$$f_m := \restr{g_m}{\ran x_{N}^m} =\theta\sum_{q=q_m +1}^{d_m} c_{m,q}\restr{f_{m,q}}{\ran x_{N}^m}.$$
We claim that 
\begin{equation}
\frac{1}{M^{1/p'_k}} \sum_{m=1}^M f_m \in W.
\label{it is in W}
\end{equation}
We first assume that (\ref{it is in W}) holds and finish the proof of our theorem as follows:
$$M^{1/p}(1+\ee)\geqslant \frac{1}{M^{1/p'_k}} \sum_{m=1}^M f_m \left(\sum_{m=1}^M x_N^m\right) = \frac{1}{M^{1/p'_k}} \sum_{m=1}^M f_m (x_N^m) \geqslant M^{1/p_k} \Theta.$$
This contradicts (\ref{here is M}).

All that remains to prove is (\ref{it is in W}). Note that 
{$$\frac{1}{M^{1/p'_k}} \sum_{m=1}^M f_m =  \theta\sum_{m=1}^M \sum_{q=q_m +1}^{d_m} \left(c_{m,q}/M^{1/p'_k}\right)\restr{f_{m,q}}{\ran x_{N}^m}.$$
Using (b) we obtain that $\sum_{m=1}^M \sum_{q=q_m +1}^{d_m} (c_{m,q}/M^{1/p'_k})^{p_k'} \leqslant 1$ and therefore
%In the above we have assume that 
%$$ \min \supp x_N^m \leqslant \min \supp f_{m,q_m+1} <\max\supp f_{m,d_m}\leq \max \supp x_N^M.$$
i}t suffices to show that $((f_{m,q})_{q=q_m+1}^{d_m})_{1 \leqslant m \leqslant M}$ is { an} admissible and very fast growing { sequence of functionals, each one of which has order strictly smaller than $k$, which will imply that $f$ is a funtional in $W$ of order-$k$}. First we check admissibility:
\begin{equation}
\begin{split}
 \sum_{m=1}^M d_m & \leqslant M \max \supp x_1 \\
& \leqslant \min\supp x_2^1 \cdot \min \supp x_2^1 \\
& \leqslant \min\supp f_{1,q_1+1}.
\end{split}
\end{equation}
The first inequality follows from item (i), the second from that fact that $M< \max \supp x_1<\min\supp x_2^1$ and the third comes from item (iii) (for $m=1$).

{ Note that by (b) the functionals under consideration have order strictly smaller than $k$ and } for each $m$ with $1 \leqslant m \leqslant M$ the collection $(f_{m,q})_{q=q_m+1}^{d_m}$ is very fast growing{. At last,} it suffices to show for each $m\in\N$ with $2 \leqslant m \leqslant M$ that 
$$(\max\supp f_{m-1,d_{m-1}})^2< \min\supp f_{m,q_{m}+1} \mbox{ and }$$
$$\max\supp f_{m-1,d_{m-1}}<s(f_{m,q_{m}+1}).$$ 

This, however, follows from item (iii) since $$\max\supp f_{m-1,d_{m-1}}< \min\supp x_2^m.$$ This proves the claim and finishes the proof of the theorem.
\end{proof}

We are interested in three problems related to the present work.

\begin{problem}
Let $1<p_1<p_2< \infty$. Is the space $X_{p_1,p_2}$ constructed in this paper super-reflexive?
\end{problem}

\begin{problem}
Let $1 \leqslant p_1 < p_2 \leqslant \infty$. Does there exist a space so that in every block subspace the Krivine set is $[p_1,p_2]$? More generally, which types of closed sets can be hereditary Krivine sets?
\end{problem}

\begin{problem}
Let $F\subset[2,\infty)$ be finite. Does there exist a Banach space $X$ such that 
for every infinite dimensional subspace $Y$ of $X$, $\ell_p$ is finitely represented in $Y$ if and only if $p\in\{2\}\cup F$?  In particular, does our construction satisfy this?
\end{problem}

\end{document}